\def\ps@pprintTitle{%
  \let\@oddhead\@empty
  \let\@evenhead\@empty
  \let\@oddfoot\@empty
  \let\@evenfoot\@oddfoot
}
\newcommand{\comm}[1]{}
\newtheorem{teorema}{Theorem}[section]
\newtheorem{lema}[teorema]{Lemma}
\newtheorem{proposicion}[teorema]{Proposition}
\newtheorem{corolario}[teorema]{Corollary}
\theoremstyle{definition}
\newtheorem{procedimiento}[teorema]{Procedure}
\newtheorem{definicion}[teorema]{Definition}
\newtheorem{warning}[teorema]{Warning}
\newtheorem{advice}[teorema]{Advice}
\newtheorem{convencion}[teorema]{Convention}
\newtheorem{construccion}[teorema]{Construction}
\newtheorem{experimento}[teorema]{Experiment}
\newtheorem{ejercicio}[teorema]{Exercise}
\newtheorem{observacion}[teorema]{Observation}
\newtheorem{remark}[teorema]{Remark}
\newtheorem{computacion}[teorema]{Computation}
\newtheorem{notacion}[teorema]{Notation}
\DeclareMathOperator{\rcs}{rcs}
\DeclareMathOperator{\trun}{trun}
\DeclareMathOperator{\sym}{sym}
\DeclareMathOperator{\suma}{sum}
\DeclareMathOperator{\un}{un}
\DeclareMathOperator{\mult}{mult}
\DeclareMathOperator{\pol}{pol}
\begin{document}

\begin{frontmatter}



\title{Guessing sequences of eigenvectors for LMPs defining spectrahedral relaxations of Eulerian rigidly convex sets}


\author{Alejandro González Nevado} 

\affiliation{organization={Universität Konstanz, Fachbereich Mathematik und Statistik},
            addressline={Universitätstraße 10}, 
            city={Konstanz},
            postcode={78464}, 
            state={Baden-Württemberg},
            country={Germany}}

\begin{abstract}
Stable multivariate Eulerian polynomials were introduced by Brändén (and first discussed in \cite{branden2011proof,visontai2013stable, haglund2012stable}). Particularizing some variables, \cite[Corollary 6.9]{ale1} shows that it is possible to extract real zero multivariate Eulerian polynomials from them. These real zero multivariate Eulerian polynomials can be fed into constructions of  spectrahedral relaxations providing therefore approximations to the (Eulerian) rigidly convex sets defined by these polynomials. The accuracy of these approximations is measured through the behaviour in the diagonal, where the usual univariate Eulerian polynomials sit. In particular, in this sense, in \cite{ale1} is shown that the accuracy of the global spectrahedral approximation produced by the relaxation can be measured in terms of bounds for the extreme roots of univariate Eulerian polynomials. The bounds thus obtained beat the previous bounds found in the literature. However, the bound explicitly studied and obtained in \cite{ale1} beat the previously known bounds by a quantity going to $0$ when $n$ goes to infinity. Here we use numerical experiments to construct a sequence of vectors providing a (linearized) bound whose difference with the previous known bounds is a growing exponential function (going fast to infinity when $n$ grows). This allows us to establish a better (diagonal) measure of accuracy for the spectrahedral relaxation of the Eulerian rigidly convex sets provided in \cite{ale1}. In particular, we will achieve this linearizing through the sequence of vectors $\{(y,(-2^{m-i})_{i=3}^{m},(0,\frac{1}{2}),(1)_{i=1}^{m})\in\mathbb{R}^{n+1}\}_{n=1}^{\infty}$ for $n=2m$.
\end{abstract}


\begin{keyword}
Eulerian polynomial \sep Descent set \sep Permutation \sep Relaxation \sep Generalized eigenvalue \sep Approximated eigenvalue \sep Real zero polynomial \sep Rigidly convex set \sep Spectrahedron \sep Linear matrix polynomial



\end{keyword}

\end{frontmatter}




\section{Introduction}\label{intro}
We will present here an analogy between cases of sequences of polynomials showing us the importance of the study of the sequence of Eulerian polynomials (dating all the way back to \cite{euler}) from the point of view of real algebraic geometry (which can be traced back to Fourier's algorithm in \cite{fourier1826solution}). The fact that Eulerian polynomials were essentially existing in the literature 90 years before Fourier published his algorithm shows us that these polynomials have mathematical impact that goes beyond just the nature of their zeros.

However, precisely because of the importance of these polynomials in many other aspects and theories, the interest in the nature, computation and behaviour of their roots is already justified and certainly currently present in the literature, as one can see easily checking, e.g., \cite{sobolev2006selected, savage2013eulerian, savage2015s, yang2015real, mezo2019combinatorics, haglund2019real, alexandersson2025real}. Now, we want to also build a highway driving in the opposite direction: Eulerian polynomials are, due to their relevance in these many mentioned areas, the best laboratory to test techniques in RAG (real algebraic geometry) and even to discover and test new venues and patterns susceptible to be of independent interest to other fields related to RAG and beyond. This is a phenomenon that is getting more prominence and research nowadays in several different forms, as can be seen in \cite{katz2024matroidal,branden2011proof,haglund2012stable,visontai2013stable}.

In particular, a deeper study of these polynomials can serve the purpose of testing the power of some of our techniques (as the one in \cite{main} we mainly focus on here) to estimate roots (see \cite{sobolev2006selected,mezo2019combinatorics}) and to understand phenomena about several notions of polynomial lifting and injection (see \cite{haglund2012stable,visontai2013stable}). This study has therefore the power and ambition to expand our view about the horizons of modern polynomial research. It does so by slowly shaping a path along new landscapes that such research provides. It also shows, at the same time, many new openings and ways shaped and enlightened through the connections provided by the relations that these objects establish between the different realms where they root and emerge from. Thus, this analysis offers us a wide prospect about the huge panorama bridged and intertwined together in this manner through the understanding of the structures hidden between the roots and coefficients of these polynomials and their many liftings.

In order to start our introduction to these polynomials here in analogy to an even more classical sequence of polynomials, we will need to be able to transform sequences into polynomials. This transformation that we will use to construct these polynomials here will show us that the extension of the span of the mentioned conceptual roots of the Eulerian polynomials goes even much deeper than we already wrote into the turbulent ground of mathematics.

\begin{definicion}[Polynomialization]
Let $n\in\mathbb{N}$ and consider a finite sequence $s\colon \{0\}\cup[n-1]\to\mathbb{K}$ for a field $\mathbb{K}$. We define the \textit{polynomialization of the sequence} $s$ as $\pol(s,x):=\sum_{i=0}^{n-1}s(i)x^{i}\in\mathbb{K}[x]$.
\end{definicion}

A polynomialization is therefore just a finite generating series. When we are given numbers in a bidimensional array (like a matrix or a triangle), we can transform the information of such arrays into polynomial sequences through polynomialization. In classical mathematics, the most prominent triangles are those for which each row is symmetric through the main altitude and begins and ends with $1$. These triangles are called in \cite{barry2006integer} \textit{generalized Pascal triangles}. The two more famous examples of these triangles are Pascal's triangle itself and Euler's triangle. We will want our polynomialization to be real rooted and, in a further step, we will want analogues of real rootedness in multiple variables. The analogous family that we are going to use here is the family of RZ polynomials.

\begin{definicion}[Real zero polynomial]\cite[Definition 2.1]{main}\label{RZ}
Let $p\in\mathbb{R}[\mathbf{x}]$ be a polynomial. We say that $p$ is a \textit{real zero polynomial} (or a \textit{RZ polynomial} for short) if, for all directions $a\in\mathbb{R}^{n}$, the univariate restriction $p(ta)$ verifies that all its roots are real, i.e., if, for all $\lambda\in\mathbb{C}$, we have $$p(\lambda a)=0 \Rightarrow \lambda\in\mathbb{R}.$$
\end{definicion}

It is immediate that a RZ polynomial $p$ verifies $p(0)\neq0$. There are however extensions of this definition that allow $p(0)=0$, see, e.g., \cite{knese2016determinantal}. Thanks to the fact that $p(0)\neq0$ it is possible to introduce the following subset of the domain $\mathbb{R}^{n}$ of the polynomial $p$. This subset is of \textit{central} importance to our research.

\begin{definicion}[Rigidly convex set]\cite[Definition 2.11]{main}\label{RCS}
Let $p\in\mathbb{R}[\mathbf{x}]$ be a RZ polynomial and call $V\subseteq\mathbb{R}^{n}$ its set of zeros. We call the \textit{rigidly convex set} of $p$ (or its \textit{RCS} for short) the Euclidean closure of the connected component of the origin $\mathbf{0}$ in the set $\mathbb{R}^{n}\smallsetminus V$ and denote it $\rcs(p)$.
\end{definicion}

Using wisely the famous triangles of numbers we mentioned above we can build examples of RZ polynomials. The construction with Pascal's triangle is well-known and immediate.

\begin{construccion}[Pascal's triangle]
Polynomializing the $i$-th row of Pascal's triangle gives the polynomial $(1+x)^{i}$. Thus the Pascal triangle encodes the sequence of polynomials $\{(1+x)^{i}\}_{i=0}^{\infty}$. This sequence clearly admits the following simple recurrence relation $A_{n}=(1+x)A_{n-1}.$ We can therefore easily lift that sequence of real rooted polynomials to a multivariate sequence of RZ polynomials\footnote{These polynomials are so simple that we do not even need to use Borcea-Br\"and\'en theory of stability preserving operators here.} introducing a new variable in each iteration of the recurrence forming thus the sequence of multivariate polynomials $\{\prod_{j=1}^{i}(1+x_{j})\}_{i=0}^{\infty}.$ We call this sequence the \textit{Pascal multivariate sequence}.
\end{construccion}

It is clear that the polynomials in the Pascal multivariate sequence verify that, for each $i$, the rigidly convex set $C_{i}$ of the $i$-th polynomial is the corresponding positive orthant $O_{i}$ of $\mathbb{R}^{i}$ up to a translation by $\mathbf{1}\in\mathbb{R}^{i}$, that is, $C_{i}+\mathbf{1}=O_{i}$. Thus we see that the corresponding RCSs $C_{i}$ in the sequence $\{C_{i}\}_{i=0}^{\infty}$ is, not only spectrahedral, but actually polyhedral. It is therefore natural to ask what happens for the corresponding construction following the other well-known triangle. Sadly, we will see that not much is known so far.

\begin{construccion}[Euler's triangle]
Polynomializing the $i$-th row of Euler's triangle gives the (univariate) Eulerian polynomial $A_{i}$ of degree $i$. Thus we have just built the sequence $A=\{A_{i}\}_{i=0}^{\infty}$ of Eulerian polynomials. It is well known that this sequence admits the following simple recurrence relation $$A_{n}=(n+1)xA_{n-1}(x)+(1-x)(xA_{n-1})'$$ for $n>0$. Now, we can see that this recurrence involves taking a derivative of the previous element in the sequence $A_{n-1}$ with respect to the external variable $x$ of the operator (or considering the sequence of derivatives of Eulerian polynomials, which would not be more helpful in this situation). This implies that we will have to do further work to lift this sequence of univariate real rooted polynomials to a sequence of multivariate RZ polynomials. We can indeed also lift that sequence of real rooted polynomials to a multivariate sequence of RZ polynomials\footnote{Here using Borcea-Br\"and\'en theory of stability preserving operators will be indeed necessary to proceed.} but this will require that we introduce an additional set of variables. In particular, we can proceed homogenizing the sequence (through the new variable $y$) in order to obtain the sequence of homogenenizations $A^{h}=\{A^{h}_{i}\}_{i=0}^{\infty}.$ This sequence of homogeneous polynomials verifies therefore, for $n>0$, the recursion \begin{gather*}A^{h}_{n}=(n+1)xA^{h}_{n-1}+(y-x)\frac{\partial}{\partial x}(xA^{h}_{n-1})=\\(n+1)xA^{h}_{n-1}+yA^{h}_{n-1}+yx\frac{\partial}{\partial x}A^{h}_{n-1}-xA^{h}_{n-1}-x^{2}\frac{\partial}{\partial x}A^{h}_{n-1}=\\nxA^{h}_{n-1}+yA^{h}_{n-1}+yx\frac{\partial}{\partial x}A^{h}_{n-1}-x^{2}\frac{\partial}{\partial x}A^{h}_{n-1}=\\(nx+y)A^{h}_{n-1}+(yx-x^{2})\frac{\partial}{\partial x}A^{h}_{n-1}.\end{gather*} We will now immediately see that we can rewrite this recursion in a more revealing form as \begin{gather}\label{friendlier} A^{h}_{n}=(x+y)A^{h}_{n-1}+xy\left(\frac{\partial}{\partial x}+\frac{\partial}{\partial y}\right)A^{h}_{n-1}.\end{gather} The proof of this is simple. We can see this fact following the differing terms in order to verify that \begin{gather*}
    (n-1)xA_{n-1}^{h}-x^{2}\frac{\partial}{\partial x}A^{h}_{n-1}=xy\frac{\partial}{\partial y}A^{h}_{n-1},
\end{gather*} which is equivalent to \begin{gather*}
    (n-1)A_{n-1}^{h}-x\frac{\partial}{\partial x}A^{h}_{n-1}=y\frac{\partial}{\partial y}A^{h}_{n-1},
\end{gather*} which is also equivalent to \begin{gather*}
    (n-1)A_{n-1}^{h}=(x\frac{\partial}{\partial x}+y\frac{\partial}{\partial y})A^{h}_{n-1},
\end{gather*} which is clear noticing that $A^{h}_{n-1}$ has degree $n-1$ and noticing what the operator $x\frac{\partial}{\partial x}+y\frac{\partial}{\partial y}$ actually does because \begin{gather*}
    \left(x\frac{\partial}{\partial x}+y\frac{\partial}{\partial y}\right)x^{n}y^{m}=nx^{n}y^{m}+mx^{n}y^{m}=(n+m)x^{n}y^{m}.
\end{gather*} This finishes the proof for the recursion of the homogeneous polynomials. We can therefore now use the expression of the homogenized recursion in Equation \ref{friendlier} to lift to a multivariate sequence. We do this introducing a new pair of variables in each iteration. In particular, we force new variables in each iteration by setting, for $n>0$, $A^{h}_{n}=$  \begin{gather*}(x_{n}+y_{n})A^{h}_{n-1}+x_{n}y_{n}\sum_{i=1}^{n-1}\left(\frac{\partial}{\partial x_{i}}+\frac{\partial}{\partial y_{i}}\right)A^{h}_{n-1}.\end{gather*} Now Borcea-Br\"and\'en theory of stability preserving operators ensures that the polynomials in this sequence of multivariate homogenenous polynomials are real stable \cite[Subsection 2.5]{haglund2012stable}. We can also ensure, as we did in \cite{ale1}, that the sequence of dehomogenizations $A:=\{A^{h}_{i}|_{\mathbf{y}=\mathbf{1}}\}_{i=0}^{\infty}$ is formed by RZ polynomials verifying that the $i$-th polynomial has $i$ different variables $(x_{1},\dots,x_{i})$ and degree $i$.
\end{construccion}

This is how we can synthetically build our lifting of Eulerian polynomials using stability preserving operators and lifting in the recursion. All this without looking at the combinatorial meaning of such lifting or the recursion. These multivariate Eulerian polynomials of course have combinatorial meaning, as we can see in \cite{haglund2012stable, visontai2013stable, ale1}. Observe that this synthetic way of introducing the sequence of RZ multivariate Eulerian polynomials makes us forget about the important underlying combinatorial structure but, at the same time, it allows us to avoid the appearance of ghost variables that arise naturally when we construct these polynomials through the descent top sets of permutations on the ordered set $[n]$ (see \cite{ale1}) because $1$ can never be a top. However, as we are able to build them without looking at the underlying combinatorial structure we do not need to go deeper into these problems and features here.

What interests us the most here is how the process described in the construction above gives us, in a natural way, a multivariate analogue to the sequence of univariate Eulerian polynomials, that is, to the sequence of polynomials constructed polynomializing the rows of the second most famous generalized Pascal triangle. This is remarkable because the process in the Pascal triangle gives RZ polynomials whose RCSs are, up to a simple translation by the vector $\mathbf{1}$ of all ones, the positive orthants in all dimensions. Thus, the Eulerian RCSs are a natural object to look at and put in comparison with the positive orthants. However, while a lot is known about the positive orthants, since these are very simple sets, not so much is known about Eulerian RCSs. In particular, it is clear that the positive orthants are polyhedra (thus also spectrahedra). Meanwhile, we do not even know if the Eulerian RCSs are spectrahedra. This is why in this paper (following the trail of \cite{ale1}) we research the accuracy of spectrahedral approximations (in fact, relaxations \cite[Definition 4.6]{ale1}) to these Eulerian RCSs.

In order to measure the accuracy of these approximations we look at their behaviour in the diagonal, where we find the well-known univariate version of the Eulerian polynomials. There, we use the accuracy of bounds to the extreme roots in order to measure how good is the relaxation (around the diagonal). In \cite{ale1} we could prove, through a linearization, that the act of increasing the variables does indeed improve the (asymptotic) behaviour of the relaxation (around the diagonal) providing better bounds and, therefore, more accuracy of the global spectrahedral approximation (relaxation) around the diagonal. Here, we will see that these bounds can be strengthened even more using a better sequence of vectors in the linearization. As was noted in \cite{ale1}, these linearizations are necessary in order to avoid the impracticable task of dealing directly with the determinantal polynomials produced by considering the determinant of the LMP of the relaxation. Dealing with that determinant, would give us exactly the bound obtained via the relaxation and, therefore, the accuracy obtained around the diagonal for the global spectrahedral relaxation, but all this would happen at the cost of having to deal with a polynomial even more difficult (and, fundamentally, with less nice symmetries) than the original multivariate Eulerian polynomials defining the RCSs we are interested in, which would be nonsense. This is why we have to find good sequences of vectors to feed the linearization process with.

We explain in the next section the contents of this article. In particular, we stress here that in this article we construct a guess for a nice sequence of vectors fulfilling the promised improvement in our measure of accuracy of the spectrahedral relaxation as an approximation. We do this measurement focusing our attention and tools around the diagonal. We notice that this way of measuring accuracy along (around) the diagonal was proposed as a first rudimentary method to asses this problem on these global approximations to RCSs in \cite{ale1}. In fact, this approach (around the diagonal) towards detecting accuracy of the approximation given by the spectrahedral relaxation provides in particular a measure of the accuracy of our estimation.

It is evident that there could be more and better measures, but the one we use here and in \cite{ale1} fits perfectly for a first approach to the topic. Precisely, this estimation is the tool that allow us to assess how good is the spectrahedral relaxation introduced in \cite{main} as a global approximation to the Eulerian RCSs introduced in \cite{ale1} and discussed above. Here therefore we introduce and explain the process that we will use to construct nice guesses for sequences of approximated eigenvectors for the LMP produced by the spectrahedral relaxation introduced in \cite{main} when restricted to the diagonal. And we do this since we measure accuracy using the behaviour of our approximations and estimations around the diagonal, as stipulated in \cite{ale1} as a rudimentary method for dealing with these problems in an initial approach (at least until these explorations gain more traction).

\section{Contents}

We began the introduction in Section \ref{intro} above constructing the RZ multivariate Eulerian polynomials in a synthetic way. Thus we avoid having to resort to the combinatorial interpretation of these polynomials and gave instead a construction based on generalized Pascal triangles and stability preserving operators lifting univariate recurrence relations to multivariate ones. In the next Section \ref{over} we will give a short overview of the relevance of these polynomials and their multivariate liftings and why these are important in many areas of mathematics. Then in Section \ref{prelim} we provide the reader with the preliminaries to understand the nature of the tool given by the relaxation, which we use here to bound and (laterally) approximate RCSs. We shortly remind in Section \ref{mult} the importance of the fact that the polynomials we are building here are RZ and the impact that that family of polynomials has in the current development of optimization techniques. In Section \ref{count} we give an overview of the counting formulas and strategies used when dealing with these polynomials and compute the values and functions necessary for the construction of their corresponding spectrahedral relaxations \cite{main,ale1}. With the relaxation constructed, we explain in Section \ref{extr} how we extract bounds for the extreme roots from the relaxation through linearization. At this point, in Section \ref{asymp}, we use a short digression to talk about the asymptotic tools we will need to continue our study. In Section \ref{pre} we check and discuss the weakness and benefits of bounds obtained previously using different methods or linearizations than the ones we use here. This discussion let us explore the possibility of constructing better linearizations through the use of better guessing strategies for the approximated eigenvectors that we can build. We do this analysis in Section \ref{guess} taking care of our choice to ensure that we are effectively using the advantages provided by multivariability. The first development of the strength of our new guess is performed in Section \ref{limit}, where we discuss the limitations of our methods and prepare the setting for the computations in the next section. In that Section \ref{form} we provide the form of the bound obtained and, taking care in Section \ref{manag} that we manage the radicals appearing in the optimal $y$ we have to choose correctly, we compute the actual bound, through a simple optimization process, in Section \ref{explo}, where we also see how that bound provides an exponential improvement with respect to the bounds previously discussed in \cite{ale1}. Finally, in Section \ref{other} we discuss other possible paths to improve our bounds and explain the nature of these improvements in Section \ref{nature}. We end our journey with Section \ref{con}, where we conclude this article with an overview of what we have done here in direct connection to what we still need to do in the future, thus providing promising venues and ideas for new and future research topics and possibilities within the framework established, introduced and briefly discussed in this paper.

\section{Overview}\label{over}

The main result of this article will be a measure of accuracy around the diagonal of the spectrahedral approximation provided by the relaxation in terms of bounds given for the extreme roots of the univariate Eulerian polynomials sitting in such diagonal. Reading \cite{ale1} we can see that the jump from the univariate Eulerian polynomials to the multivariate ones makes the bound for the extreme roots provided by the relaxation improve. However, since we cannot effectively directly deal with the LMP provided by the relaxation through the determinant, we had to obtain a certificate of improvement through a strategy of linearization through an \textit{adequate} sequence of vectors. Correctly constructing these vectors in a way that their entries are easy to compute and they are effective for our calculation was a challenging task. It was possible to certificate the improvement when increasing the number of variables in the Eulerian polynomials (and therefore in the relaxation) in \cite{ale1} through a linearization constructed there using the sequence of vectors $\{(y,1,\mathbf{-1})\in\mathbb{R}^{n+1}\}$. This was the first time that the gain in effectivity of the multivariate relaxation over the univariate relaxation through the increase of the number of variables could be proven. However, the sequence of vectors used for this linearization in \cite{ale1} was not entirely satisfactory because of the following three seasons.

\begin{remark}[Plenty of room for improvement]\label{three}
Using the sequence of vectors $\{(y,1,\mathbf{-1})\in\mathbb{R}^{n+1}\}$ for the linearization in \cite{ale1} leave in fact a lot of room for improvement as can be seen in different stages of the process of estimating the accuracy of the bounds obtained. We comment on these.
\begin{enumerate}
    \item \textbf{Experiments point further.} When we do numerical experiments and compute things numerically, it seems like the multivariate relaxation should offer a much greater improvement than the one we see after linearizing. This does not prove anything in principle because it could be that the improvement is only as good as we observed for relatively small indices $n$, i.e., for these indices for which we can make experiments before computations become unmanageably lengthy and time-consuming. However, it is not clear what phenomena could actually hinder the generalization of that greater improvement for higher indices $n$.
    \item \textbf{Vector too simple.} Our linearization mix back up a lot of information about the permutations that the multivariate Eulerian polynomials split. The fact that the tail is constantly $-1$ along the whole sequence does not allow to put something differential in each piece of the partitioned information. This implies that much of the information of the partition performed by the multivariate lifting of the polynomial is irremediably lost during the linearization. This linearization therefore kills much of the additional information that the multivariate relaxation could be actually retrieving. We need sequences of vectors with more structure both in terms of the term of the sequence we are in at each time and in terms of the entry we are looking at each moment. For that, we need that entries in the vectors vary when the index in the sequence varies and that these entries inside each vector show more variety. We have to accomplish this at the same time that we manage that the vector that we build with these features is easy to deal with in symbolic computational terms and, moreover, we want it to be a good guess in the sense that it must resemble enough an actual generalized eigenvalue so that we get a good approximation to the corresponding bonding eigenvalue. There are therefore many different features to balance. And these features are indeed of very different nature.
    \item \textbf{Improvement vanishes at infinity.} After we compute the actual bound and compare it with the univariate bound in \cite{ale1}, we see that it is not just that the certified improvement through the sequence of vectors $\{(y,1,\mathbf{-1})\in\mathbb{R}^{n+1}\}$ is not as great as numeric experiments show for small numbers, but that indeed that improvement \textit{vanishes} at infinity. This contradicts again what we see for small indices when we do experiments and there are not evident reasons why that improvement should vanish. This means that the linearization we are making is probably doing an asymptotic jump that is too big and bad and that therefore kills much of the actual improvement produced by the additional split information collected in the multivariate relaxation.
\end{enumerate}
These three reasons point out in the direction of further exploring the linearizations we are using so that we can find sequences of vectors with more structure and therefore able to carry more information about the multivariate split. In this way, we see that the three points above could be signaling to a clear path for improvement of our management of the linearization process of the relaxation.
\end{remark}

Our work in this article will consist therefore in addressing the three points in the Remark \ref{three} above in a constructively decisive way so that we can actually build a better sequence of vectors providing a certificate of the bounds obtained that better resembles the actual bound given by the relaxation around the diagonal. The initial procedure for accomplishing that improvement does not differ from the procedure followed in \cite{ale1}. This is why, although for completeness we will include it here, we will go through these first steps fast and we will skip these initial proofs here since they can be easily consulted in \cite{ale1} when doing that is needed.

\section{Preliminaries}\label{prelim}

We use the spectrahedral relaxation introduced in \cite{main}. In what follows, all the matrices we will consider are real symmetric. The relaxation works for RZ polynomials. This is why we constructed RZ multivariate liftings of the univariate Eulerian polynomials. Being RZ is just a generalization of being real-rooted for multivariate polynomials. For these RZ polynomials we have in \cite{main}, in fact, a construction providing a \textit{spectrahedral relaxation} of their RCSs.

\begin{definicion}[Relaxation of a RCSs]
Given the RCS $0\in C\subseteq \mathbb{R}^{n}$, we call $\Tilde{C}$ a relaxation of $C$ simply if $C\subseteq\Tilde{C}.$
\end{definicion}

We have more than this. The relaxation we use here is moreover spectrahedral.

\begin{definicion}[Spectrahedra]\cite{ramana1995some}
A set $S\subseteq\mathbb{R}^{n}$ is a \textit{spectrahedron} if it can be written as $$S=\{a\in\mathbb{R}^{n}\mid A_{0}+\sum_{i=1}^{n}a_{i}A_{i} \mbox{\ is PSD\ }\}$$ with $A_{i}$ real symmetric matrices of the same size. We call $A_{0}+\sum_{i=1}^{n}x_{i}A_{i}$ a \textit{linear matrix polynomial} (or LMP) and the condition $$a\in\mathbb{R}^{n} \mbox{\ with \ } A_{0}+\sum_{i=1}^{n}a_{i}A_{i} \mbox{\ is PSD\ }$$ a \textit{linear matrix inequality} (or LMI) on $a\in\mathbb{R}^{n}$.
\end{definicion}

The main points for the construction of this spectrahedral relaxation can be consulted in \cite{main} in general. For a more particular treatment for the case of Eulerian polynomials, the reader can consult \cite{ale1}. We present now the main tools necessary to begin to talk about the construction of the relaxation. First, it is clear that, for certain power series, it makes sense to consider their logarithm.

\begin{definicion}[Logarithm for power series]\cite[Definition 3.1, Definition 3.2]{main}
Let $p=\sum_{\alpha\in\mathbb{N}_{0}^{n}}a_{\alpha}\mathbf{x}^{\alpha}\in\mathbb{R}[[\mathbf{x}]]$ be a real power series with $p(0)=1$. We define $$\log{p}:=\sum_{k=1}^{\infty}(-1)^{k+1}\frac{(p-1)^{k}}{k}\in\mathbb{R}[[\mathbf{x}]]$$ the \textit{logarithm} of $p$.
\end{definicion}

This logarithm immediately let us construct the $L$-forms. These are the main tools to fill the entries of the matrices of the LMPs defining the spectrahedral relaxation in \cite{main}.

\begin{definicion}[$L$-form]\cite[Proposition 3.4]{main}
Let $p\in\mathbb{R}[\mathbf{x}]$ be a polynomial with $p(0)\neq0$. We define the linear form $L_{p}$ on $\mathbb{R}[\mathbf{x}]$ by specifying it on the monomial basis of $\mathbb{R}[\mathbf{x}]$. We set $L_{p}(1)=\deg(p)$ and define implicitly the rest of the values by requiring the identity of the formal power series $$-\log\frac{p(-x)}{p(0)}=\sum_{\alpha\in\mathbb{N}_{0}^{n}\\\alpha\neq0}\frac{1}{|\alpha|}\binom{|\alpha|}{\alpha}L_{p}(\mathbf{x}^{\alpha})\mathbf{x}^{\alpha}\in\mathbb{R}[[\mathbf{x}]].$$
\end{definicion}

These forms, for small degree monomial, have values that are easy to compute. These are precisely the ones we are most interested in for constructing the relaxation.

\begin{computacion}[$L$-form for degree up to three]\cite[Example 3.4]{main}
Suppose $p\in\mathbb{R}[\mathbf{x}]$ verifies $$\trun_{3}(p)=1+\sum_{i\in[n]}a_{i}x_{i}+\sum_{i,j\in[n]\\i\leq j}a_{ij}x_{i}x_{j}+\sum_{i,j,k\in[n]\\i\leq j\leq k}a_{ijk}x_{i}x_{j}x_{k}.$$ Then we have the identities \begin{gather*}L_{p}(x_{i})=a_{i},\\L_{p}(x_{i}^{2})=-2a_{ii}+a_{i}^{2},\\L_{p}(x_{i}^{3})=3a_{iii}-3a_{i}a_{ii}+a_{i}^{3}\end{gather*} for all $i\in[n]$, while \begin{gather*}L_{p}(x_{i}x_{j})=-a_{ij}+a_{i}a_{j},\\L_{p}(x_{i}^{2}x_{j})=a_{iij}+a_{i}a_{ij}-a_{j}a_{ii}+a_{i}^{2}a_{j}\end{gather*} for all $i,j\in[n]$ with $i<j$ and $$L_{p}(x_{i}x_{j}x_{k})=\frac{1}{2}(a_{ijk}-a_{i}a_{jk}-a_{j}a_{ik}-a_{k}a_{ij}+2a_{i}a_{j}a_{k})$$ for all $i,j,k\in[n]$ with $i<j<k$.
\end{computacion}

We stop at degree $3$ because this is all we need to construct the spectrahedral relaxation introduced in \cite{main}. In particular, the spectrahedral relaxation has the form of a LMI whose LMP is PSD at the origin and whose entries are defined in terms of values of the $L$-form on monomials of degree up to three. We use mold matrices to build the spectrahedral relaxation.

\begin{notacion}[Mold matrices and entrywise transformations]
We denote $L\circledcirc M$ the tensor in $B^{s_{1}\times\cdots\times s_{k}}$ obtained after applying the map $L\colon A\to B$ to each entry of $M\in A^{s_{1}\times\cdots\times s_{k}}.$ The tensor $M$ is called the \textit{mold tensor} and the map $L$ is the \textit{molding map}.
\end{notacion}

Now we can build the spectrahedral relaxation using the simple mold matrix given by the symmetric moment matrix indexed entryways by all the monomials in the variables $\mathbf{x}$ up to degree $1$. We name it.

\begin{notacion}
We denote $M_{n,\leq1}:=(1, x_{1}, \cdots, x_{n})^{T}(1, x_{1}, \cdots, x_{n})=$ \begin{equation}\label{mainmoment}
\begin{pmatrix}
1 & x_{1} & \cdots & x_{n}\\
x_{1} & x_{1}^{2} & \cdots & x_{1}x_{n}\\
\vdots & \vdots & \ddots & \vdots\\
x_{n} & x_{1}x_{n} & \cdots & x_{n}^2
\end{pmatrix}
\end{equation} the \textit{mold matrix indexed by monomials of degree up to $1$}.
\end{notacion}

Over this mold we easily build the spectrahedral relaxation. We proceed as follows.

\begin{definicion}[Spectrahedral relaxation]\label{rela}\cite[Definition 3.19]{main}
Let $p\in\mathbb{R}[\mathbf{x}]$ be a polynomial with $p(0)\neq0$ and consider the symmetric matrices $A_{0}=L_{p}\circledcirc M_{n,\leq1}$ and $A_{i}=L_{p}\circledcirc (x_{i}M_{n,\leq1})$ for all $i\in[n]$. We call the linear matrix polynomial $$M_{p}:=A_{0}+\sum_{i=1}^{n}x_{i}A_{i}$$ the \textit{pencil associated to $p$} and $$S(p):=\{a\in\mathbb{R}^{n}\mid M_{p}(a) \mbox{\ is PSD}\}$$ the \textit{spectrahedron associated to $p$}.
\end{definicion}

This construction has indeed the properties we want. In particular, it provides us indeed with a relaxation.

\begin{teorema}[Relaxation]\cite[Theorem 3.35]{main}
Let $p\in\mathbb{R}[\mathbf{x}]$ be a RZ polynomial. Then $\rcs(p)\subseteq S(p).$
\end{teorema}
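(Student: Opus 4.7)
The plan is to show directly that $v^{T} M_{p}(a) v \geq 0$ for every $v \in \mathbb{R}^{n+1}$ and every $a \in \rcs(p)$, which is equivalent to $M_{p}(a)$ being PSD, i.e., $a \in S(p)$. Writing $v = (v_{0}, v_{1}, \ldots, v_{n})^{T}$ and $y = (1, x_{1}, \ldots, x_{n})^{T}$, I first observe that $M_{n, \leq 1} = y y^{T}$ as a matrix over $\mathbb{R}[\mathbf{x}]$. Applying the linearity of $L_{p}$ to the definition of the pencil yields the compact identity
\begin{equation*}
v^{T} M_{p}(a) v = L_{p}\!\left( \ell_{v}(\mathbf{x})^{2} \bigl(1 + \langle a, \mathbf{x} \rangle\bigr) \right),
\end{equation*}
where $\ell_{v}(\mathbf{x}) = v_{0} + v_{1} x_{1} + \cdots + v_{n} x_{n}$. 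Thus the PSD condition becomes a single positivity statement about the linear form $L_{p}$ evaluated on a cubic polynomial, which is exactly the range on which the explicit formulas displayed in the preceding computation apply.

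Next I would exploit the defining series for $L_{p}$ to express its values in terms of reciprocal roots of univariate restrictions. Restricting the identity $-\log(p(-\mathbf{x})/p(0)) = \sum_{\alpha \neq 0} \tfrac{1}{|\alpha|} \binom{|\alpha|}{\alpha} L_{p}(\mathbf{x}^{\alpha}) \mathbf{x}^{\alpha}$ to the line $\mathbf{x} = t b$ and comparing with the factorization $p(-tb) = p(0) \prod_{i} (1 + t/\lambda_{i}(b))$, which is valid because $p$ is RZ, one derives the key formula $L_{p}((b \cdot \mathbf{x})^{k}) = (-1)^{k} \sum_{i} \lambda_{i}(b)^{-k}$ for $k \geq 1$, to be combined with $L_{p}(1) = \deg p = n$. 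The base case $a = 0$ then falls out by a direct expansion: writing $v' = (v_{1}, \ldots, v_{n})$,
\begin{equation*}
v^{T} M_{p}(0) v = L_{p}(\ell_{v}^{2}) = n v_{0}^{2} - 2 v_{0} \sum_{i} \lambda_{i}(v')^{-1} + \sum_{i} \lambda_{i}(v')^{-2} = \sum_{i} \bigl( v_{0} - \lambda_{i}(v')^{-1} \bigr)^{2} \geq 0,
\end{equation*}
so $A_{0} \succeq 0$.

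For general $a \in \rcs(p)$ I would exploit the convexity of the RCS to connect $0$ to $a$ by a segment contained in $\rcs(p)$ and try one of two routes. The first is a continuity argument: verify that $\det M_{p}(ta) \neq 0$ for all $t \in [0,1)$, so that the signature of $M_{p}(ta)$ is constant along the segment and hence $M_{p}(a) \succeq 0$ follows from the base case. The second, more explicit route is to produce directly a sum-of-squares representation of $L_{p}(\ell_{v}^{2} (1 + \langle a, \mathbf{x} \rangle))$, in which the extra linear factor contributes positive weights of the form $1 - \lambda_{i}(a)^{-1}$ that are nonnegative precisely because $a \in \rcs(p)$ forces $\lambda_{i}(a) \notin (0, 1]$.

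The main obstacle is the general case: the quadratic form mixes two different directions (namely $v'$, entering through $\ell_{v}^{2}$, and $a$, entering through the linear factor), so one cannot directly reduce to a single univariate restriction of $p$. Controlling this interaction requires either the nontrivial determinantal input (relating $\det M_{p}$ to the zeros of $p$ along all directions simultaneously) or a sum-of-squares decomposition that cleanly separates the two contributions; this is the step where the full RZ structure of $p$, rather than just real-rootedness along a single direction, must decisively enter.
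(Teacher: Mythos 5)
Your setup is correct, but it does not reach the theorem; note also that the paper you were asked to match contains no proof of this statement at all — it is imported verbatim from \cite[Theorem 3.35]{main} — so the only question is whether your sketch could stand on its own, and it cannot yet. The identity $v^{T}M_{p}(a)v=L_{p}\bigl(\ell_{v}^{2}(1+\langle a,\mathbf{x}\rangle)\bigr)$ (from $M_{n,\leq1}=yy^{T}$ and linearity) is right, and so is the root formula $L_{p}((b\cdot\mathbf{x})^{k})=(-1)^{k}\sum_{i}\lambda_{i}(b)^{-k}$ for RZ $p$ (two minor slips: $L_{p}(1)=\deg p$, which equals $n$ only for the Eulerian polynomials of this paper, and when $t\mapsto p(tv')$ has $m<\deg p$ roots your sum of squares at $a=0$ acquires the harmless extra term $(\deg p-m)v_{0}^{2}$). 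But $a=0$ is not the theorem; the entire content is general $a\in\rcs(p)$, and there you only list two unexecuted options. The continuity route is essentially circular: "verify that $\det M_{p}(ta)\neq0$ for $t\in[0,1)$" is not a verification one can do independently — nonvanishing of the pencil's determinant on the interior of the RCS is tantamount to the positive definiteness you are trying to prove, and it can even fail at interior points for degenerate RZ polynomials (for $p=(1-x_{1})^{d}$ already $A_{0}$ is singular), so constancy of signature needs real input. The second route is stated only as a hope, and you yourself concede that the interaction of the two directions $v'$ and $a$ is exactly what you cannot control.

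The missing idea is a two-step reduction that your sketch almost touches but never makes. First, the cubic $\ell_{v}^{2}(1+\langle a,\mathbf{x}\rangle)$ involves only the directions $v'$ and $a$, so one may restrict $p$ to the plane they span: setting $q(s,t):=p(sv'+ta)$, polarization of the line formula shows $L_{q}$ agrees with $L_{p}$ on the corresponding polynomials except in the constant term, whose deficit contributes $(\deg p-\deg q)v_{0}^{2}\geq0$; moreover $a$ corresponds to a point of $\rcs(q)$. This reduces everything to bivariate RZ polynomials. Second, one needs a genuinely nontrivial input there, and the proof in \cite{main} gets it from the Helton--Vinnikov theorem: writing $q=\det(I+x_{1}B_{1}+x_{2}B_{2})$ with real symmetric $B_{i}$, the defining series turns $L_{q}$ into a trace, $L_{q}((b\cdot\mathbf{x})^{k})=\operatorname{tr}\bigl((b_{1}B_{1}+b_{2}B_{2})^{k}\bigr)$, whence $v^{T}M_{q}(a)v=\operatorname{tr}\bigl(\Lambda(I+a_{1}B_{1}+a_{2}B_{2})\Lambda\bigr)\geq0$ with $\Lambda=v_{0}I+v_{1}B_{1}+v_{2}B_{2}$, because $I+a_{1}B_{1}+a_{2}B_{2}\succeq0$ precisely on $\rcs(q)$. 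This determinantal step (or an equivalent substitute, e.g.\ a Hermite-matrix argument) is where the full RZ structure enters, exactly at the point your proposal stops; without it the argument is a correct reformulation plus the easy case $a=0$, not a proof.
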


Now we only have to ensure that our polynomials are RZ since the theorem above guarantees us a spectrahedral relaxation if we feed our construction in Definition \ref{rela} with a RZ polynomial. This is what we see shortly in the next section.

\section{Multivariate Eulerian polynomials are real zero}\label{mult}

This was already explained in some detail in the introduction in Section \ref{intro}, but we will shortly expand the story of these polynomials here. The polynomials introduced in \cite[Theorem 3.2]{haglund2012stable}
and \cite[Theorem 3.3]{visontai2013stable} are real stable. We have to set the variables tagging the ascent tops $\mathbf{y}=\mathbf{1}$ in order to transform these real stable polynomials into RZ polynomials. For that, it is necessary to study the relations between real stability, hyperbolicity and RZ-ness. These are all concepts generalizing to real rootedness to the multivariate setting. The key result to prove this path is the following.

\begin{proposicion}[Real stability iff hyperbolicity in all positive directions]\label{realstab}\cite[Proposition 1.3]{pemantle2012hyperbolicity} and \cite[Section 5]{kummer2015hyperbolic}
A homogeneous polynomial $p\in\mathbb{R}[\mathbf{x}]$ is real stable if and only if, for any direction $e\in\mathbb{R}^{n}_{>0}$ with positive coordinates and $a\in\mathbb{R}^{n}$, the univariate polynomial $p(a-te)$ is real-rooted. 
\end{proposicion}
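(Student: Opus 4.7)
The plan is to connect both implications through the algebraic identity
$$p(a + ib) = q_{a,b}(-i), \qquad q_{a,b}(t) := p(a - tb),$$
which trades the evaluation of $p$ at a point of the upper half-plane product against the evaluation of the univariate restriction $q_{a,b}$ appearing in the statement at the complex number $-i$. Since $a,b\in\mathbb{R}^n$, we have $q_{a,b}\in\mathbb{R}[t]$, so any non-real root of $q_{a,b}$ is accompanied by its complex conjugate; I plan to use this symmetry repeatedly.

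For the forward direction, I would fix $a\in\mathbb{R}^n$ and $e\in\mathbb{R}^n_{>0}$ and argue by contradiction. If $q_{a,e}$ had a non-real root $t_0$, then after possibly replacing $t_0$ by $\overline{t_0}$ one may assume $\Ima(t_0)<0$; the imaginary part of $a-t_0 e$ is then $-\Ima(t_0)\cdot e$, which lies entrywise in $\mathbb{R}^n_{>0}$. Real stability of $p$ would therefore force $p(a-t_0 e)\neq 0$, contradicting $q_{a,e}(t_0)=0$. Hence every root of $q_{a,e}$ is real.

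For the converse, given $z\in\mathbb{C}^n$ with $\Ima(z_j)>0$ for all $j$, I would write $z=a+ib$ with $b\in\mathbb{R}^n_{>0}$ so that $p(z)=q_{a,b}(-i)$. The hypothesis applied at the point $a$ in the direction $b$ then says that $q_{a,b}$ has only real roots and, in the standard reading, also that its degree equals $\deg p$. Since $-i$ is not real, this forces $q_{a,b}(-i)\neq 0$ and hence $p(z)\neq 0$, which is exactly real stability of $p$.

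The hard part, and the only non-routine step, is the degree issue in the reverse direction: I need to rule out the degenerate possibility that $q_{a,b}$ is either identically zero or of degree strictly below $\deg p$, a situation that occurs precisely when $p(b)=0$. If the hypothesis is read in its weakest sense (only demanding real-rootedness of whatever polynomial $q_{a,b}$ happens to be), I would close this gap by a G\aa rding-cone argument: the hyperbolicity cone attached to any fixed $e\in\mathbb{R}^n_{>0}$ is open, convex, and disjoint from $\{p=0\}$, and the hypothesis embeds the entire positive orthant into that cone. In particular $p(b)\neq 0$ for every $b\in\mathbb{R}^n_{>0}$, which is what the imaginary/real comparison above requires.
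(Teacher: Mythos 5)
The paper itself does not prove this proposition; it is quoted with citations to Pemantle and to Kummer--Plaumann--Vinzant, and your argument is essentially the standard proof from those sources. Both of your main steps are correct: the conjugate-root trick for the forward implication (a non-real root $t_{0}$ of the real polynomial $q_{a,e}$ may be taken with $\operatorname{Im}(t_{0})<0$, and then $a-t_{0}e$ lies in the open upper half-plane product, contradicting stability), and the identity $p(a+ib)=q_{a,b}(-i)$ for the converse, combined with the fact that a nonzero real polynomial having only real roots cannot vanish at the non-real point $-i$.

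Two remarks on your final paragraph, which is where the proposal is shaky. First, the degree drop is a non-issue: if $q_{a,b}$ is nonzero but of degree smaller than $\deg p$ (equivalently $p(b)=0$, since homogeneity makes the leading coefficient equal to $(-1)^{\deg p}p(b)$), the evaluation argument still yields $q_{a,b}(-i)\neq 0$. The only genuinely degenerate case is $q_{a,b}\equiv 0$, and that is already excluded by the natural reading of ``real-rooted'': the zero polynomial has non-real roots (for instance $-i$), so it is not real-rooted. Second, the G\aa rding-cone patch you propose for the ``weakest reading'' does not work as stated: the assertion that ``the hypothesis embeds the entire positive orthant into that cone'' is exactly the containment that would have to be proved, and under that weakest reading the implication is in fact false. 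Take $p=x_{1}-x_{2}$: every non-degenerate restriction $p(a-te)$ is linear or a nonzero constant, hence real-rooted, yet $p$ is not real stable, and no hyperbolicity cone of $p$ contains the whole positive orthant. So that paragraph should be dropped; simply record that an identically vanishing restriction is not real-rooted, and your proof is complete and matches the cited literature.
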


Proposition \ref{realstab} above connects real stability and hyperbolicity. The following equivalence finishes the connection with RZ polynomials relating hyperbolicity to RZ-ness.

\begin{proposicion}[Hyperbolicity and RZ-ness]\cite[Proposition 6.7]{main}
\label{dehomo}
Let $p\in\mathbb{R}[x_{0},\mathbf{x}]$ be a homogeneous polynomial. Then $p$ is hyperbolic in the direction of the first unit vector $u=(1,\mathbf{0})\in\mathbb{R}^{n+1}$ if and only if its dehomogenization $q=p(1,x_{1},\dots,x_{n})\in\mathbb{R}[\mathbf{x}]$ is a RZ polynomial.
\end{proposicion}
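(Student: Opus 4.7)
The plan is to exploit the homogeneity identity $p(\tau,\mathbf{v})=\tau^{d}q(\mathbf{v}/\tau)$, valid for $\tau\neq 0$, which translates the univariate restriction appearing in the hyperbolicity test into the one appearing in Definition \ref{RZ}. Writing $p(x_{0},\mathbf{x})=\sum_{k=0}^{d}x_{0}^{k}p_{k}(\mathbf{x})$ with each $p_{k}$ homogeneous of degree $d-k$, the dehomogenization becomes $q(\mathbf{x})=\sum_{k=0}^{d}p_{k}(\mathbf{x})$, and the evaluation $p(u)=p_{d}=q(\mathbf{0})$ is a nonzero constant precisely when $q(\mathbf{0})\neq 0$, so the nonvanishing condition at the origin built into RZ-ness matches the nonvanishing $p(u)\neq 0$ needed for hyperbolicity. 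Moreover, because the leading coefficient in $\tau$ of $p(\tau,\mathbf{v})$ is the constant $q(\mathbf{0})$, this polynomial always has degree exactly $d$ in $\tau$ and hence exactly $d$ complex roots counted with multiplicity.

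For the direction ($\Leftarrow$), assume $q$ is RZ. I would fix $v=(v_{0},\mathbf{v})\in\mathbb{R}^{n+1}$ and analyze when $p(v_{0}+s,\mathbf{v})$ is real-rooted in $s$, which after the harmless shift $\tau=v_{0}+s$ is equivalent to asking that $p(\tau,\mathbf{v})$ be real-rooted in $\tau$. If $\mathbf{v}=\mathbf{0}$ the polynomial equals $\tau^{d}q(\mathbf{0})$ and the claim is immediate. If $\mathbf{v}\neq\mathbf{0}$, I would use $p(\tau,\mathbf{v})=\tau^{d}q(\mathbf{v}/\tau)$ to put the nonzero roots $\tau\neq 0$ in bijection with the roots $\lambda=1/\tau$ of $q(\lambda\mathbf{v})$, all of which are real by the RZ hypothesis applied in direction $\mathbf{v}$. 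The missing multiplicity at $\tau=0$ is then filled up by the difference $d-d'$, where $d'$ is the degree of $q(\lambda\mathbf{v})$ in $\lambda$, producing $d$ real roots in total.

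For the direction ($\Rightarrow$), assume $p$ is hyperbolic in direction $u$. I would pick an arbitrary $a\in\mathbb{R}^{n}$ and specialize hyperbolicity to $v=(0,a)$, which yields that $p(s,a)=\sum_{k}s^{k}p_{k}(a)$ is real-rooted in $s$ with $d$ real roots. The same identity then gives a bijection (preserving multiplicities) between the nonzero roots of $p(s,a)$ and the roots of $q(\lambda a)$ via $s\leftrightarrow\lambda=1/s$. Since the constant term of $q(\lambda a)$ as a polynomial in $\lambda$ is $q(\mathbf{0})\neq 0$, zero is never a root of $q(\lambda a)$, so every root of $q(\lambda a)$ lifts to a nonzero and hence real root of $p(s,a)$, giving the RZ-ness of $q$.

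The delicate point I expect to require care is the bookkeeping for the degree drop: if $p_{0}(\mathbf{v}),p_{1}(\mathbf{v}),\dots,p_{j-1}(\mathbf{v})$ all vanish but $p_{j}(\mathbf{v})\neq 0$, then $p(\tau,\mathbf{v})$ acquires a root at $\tau=0$ of multiplicity $j$, while symmetrically $q(\lambda\mathbf{v})$ loses $j$ from its top degree in $\lambda$. Matching these two effects under the change of variable $\tau\leftrightarrow 1/\lambda$ is precisely what keeps the count of $d$ real roots consistent between the two sides, and is the only place where one must argue beyond the trivial identity. Everything else is an elementary birational substitution.
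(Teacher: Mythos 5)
Your proposal is correct: the identity $p(\tau,\mathbf{v})=\tau^{d}q(\mathbf{v}/\tau)$, the matching of $p(u)=q(\mathbf{0})$ with the nonvanishing conditions on both sides, and the reciprocal correspondence $\tau\leftrightarrow 1/\lambda$ between nonzero roots (with the roots at $\tau=0$ being harmlessly real) give both implications without gaps. Note that the paper itself does not prove this statement but quotes it from \cite{main}; your argument is essentially the standard dehomogenization proof of that cited proposition, so there is nothing to compare beyond saying it is the expected route.
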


The combination of these two proposition allows us to conclude immediately what we want. Our polynomials here fit the setting of the relaxation.

\begin{corolario}[RZ-ness of dehomogenization]
$A_{n}(\mathbf{x},\mathbf{1})$ is RZ.
\end{corolario}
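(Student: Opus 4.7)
The plan is to chain Propositions \ref{realstab} and \ref{dehomo}, starting from the fact (recalled in Section \ref{intro}, via Borcea--Br\"and\'en stability-preserving operators) that the multivariate lift $A_n^h(\mathbf{x},\mathbf{y})$ is real stable and homogeneous in the $2n$ variables $(\mathbf{x},\mathbf{y})$.

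To bring us inside the scope of Proposition \ref{dehomo}, which is phrased with a single distinguished variable, I would collapse the $\mathbf{y}$-coordinates to one new variable $s$ by setting $B(\mathbf{x},s):=A_n^h(\mathbf{x},s\mathbf{1})$. Identifying variables preserves real stability (substituting $z_1=\cdots=z_k$ keeps the polynomial nonzero on the upper polydisc in the surviving variables), and it preserves total-degree homogeneity, so $B$ is real stable and homogeneous of degree $n-1$ in the $n+1$ variables $(\mathbf{x},s)$.

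Proposition \ref{realstab} applied to $B$ then gives hyperbolicity in every strictly positive direction $e\in\mathbb{R}^{n+1}_{>0}$, in particular at $(\epsilon\mathbf{1},1)$ for each $\epsilon>0$. I would push this to the boundary direction $(\mathbf{0},1)$ via the standard continuity argument: since roots of a univariate polynomial depend continuously on its coefficients and real-rootedness is preserved under limits with non-collapsing leading coefficient, hyperbolicity at $(\epsilon\mathbf{1},1)$ passes to hyperbolicity at $(\mathbf{0},1)$ provided $B(\mathbf{0},1)\neq 0$. That non-vanishing I would verify by a quick induction on the recursion defining $A_n^h$: at $\mathbf{x}=\mathbf{0}$ every summand of the recursive step carrying an $x_n$ factor vanishes (including all the partial-derivative terms, which come premultiplied by $x_n y_n$), leaving $A_n^h(\mathbf{0},\mathbf{y})=y_n A_{n-1}^h(\mathbf{0},y_1,\ldots,y_{n-1})$; iterating yields a pure monomial in the $y_i$, which evaluates to $1$ at $\mathbf{y}=\mathbf{1}$.

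With $B$ hyperbolic at $(\mathbf{0},1)$, Proposition \ref{dehomo} (with $s$ in the role of $x_0$) says that the dehomogenization $B(\mathbf{x},1)=A_n^h(\mathbf{x},\mathbf{1})=A_n(\mathbf{x},\mathbf{1})$ is RZ, which is exactly the claim. I would expect the only non-formal step to be the limit $\epsilon\to 0^+$, since the rest is bookkeeping; that limit is precisely what the direct evaluation $B(\mathbf{0},1)=1$ is there to license.
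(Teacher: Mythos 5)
Your proof is correct and takes essentially the same route the paper intends, namely chaining Proposition \ref{realstab} with Proposition \ref{dehomo}; you merely supply the glue the paper leaves implicit (identifying the $\mathbf{y}$-variables into a single homogenizing variable, and passing from the strictly positive directions $(\epsilon\mathbf{1},1)$ to the boundary direction $(\mathbf{0},1)$ by the limit argument, which is licensed by the recursion giving $A_n^h(\mathbf{0},\mathbf{y})=y_1\cdots y_n$ and hence $A_n^h(\mathbf{0},\mathbf{1})=1$). The only blemish is harmless bookkeeping: $B(\mathbf{x},s)=A_n^h(\mathbf{x},s\mathbf{1})$ is homogeneous of degree $n$, not $n-1$.
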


Now we just need to construct the spectrahedral relaxation. For this, we have to count over some permutations and compute, in this way, several $L$-forms.

\section{Counting descent tops, $L$-forms and LMPs}\label{count}

We will need to count permutations with a prescribed descent set. We introduce a notation for the set of these permutations.

\begin{definicion}[Exact descent]
\label{rns}
Fix $S\subseteq[n+1]$. We denote the set of permutations that \textit{descend exactly} at $S$ by $$R(n,S):=\{\sigma\in\mathfrak{S}_{n+1}\mid S=\mathcal{DT}(\sigma)\},$$ where $\mathcal{DT}(\sigma)$ is the descent top set of $\sigma$.
\end{definicion}

The cardinals of these sets are the basic objects that we will need to compute in order to fill the entries of the coefficient matrices forming the LMP that defines the spectrahedral relaxation introduced in \cite{main}. In \cite{ale1}, we can see the following expression for these cardinals.

\begin{corolario}[Cardinal of sets with exact descent top set]\cite{ale1}
\label{coroR}
Fix $s=|X|$ and $\{x_{1}<\cdots<x_{s}\}=X\subseteq[n]$ and $\{y_{1}<\cdots<y_{n-s}\}=Y\subseteq[n]$ with $X\cup Y=[n]$. For subsets $S\subseteq Y$, name the ordered chain of elements obtained through the union $X\cup S=\{x_{S,1}<\cdots<x_{S,s+|S|}\}$. Thus, going through the complement, we have that $|R(n-1,X)|=$
\begin{gather}\label{coroR1}
\sum_{S\subseteq[n]\smallsetminus{X}}(-1)^{|S|}(n-|X\cup S|)!\prod_{i=1}^{s+|S|}(x_{S,i}-i).
\end{gather} Similarly, we can express this number in terms of deletions in the initial set as $|R(n-1,X)|=$
\begin{gather}\label{coroR2}
\sum_{J\subseteq X}(-1)^{|X\smallsetminus J|}\alpha(J)\hat{!},
\end{gather} where, for an ordered set $X=\{x_{1}<\dots<x_{k}\}$, we define $$\alpha(X):=(x_{1}-1,x_{2}-x_{1},x_{3}-x_{2},\dots,x_{k}-x_{k-1})$$ and, for a tuple $\beta=(\beta_{1},\dots,\beta_{k})$, we use the operator $$\beta\hat{!}:=(k+1)^{\beta_{1}}k^{\beta_{2}}\cdots3^{\beta_{k-1}}2^{\beta_{k}}.$$
\end{corolario}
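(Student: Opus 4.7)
The plan is to establish both identities by running Möbius inversion on the descent-top set, after computing two auxiliary enumerations. For $T\subseteq[n]$ define
$$h(T) := |\{\sigma \in \mathfrak{S}_{n}\mid T \subseteq \mathcal{DT}(\sigma)\}|, \qquad g(T) := |\{\sigma \in \mathfrak{S}_{n}\mid \mathcal{DT}(\sigma) \subseteq T\}|.$$
Möbius inversion on the Boolean lattice of subsets of $[n]$ then gives, for every $X\subseteq [n]$,
$$|R(n-1,X)| = \sum_{T \supseteq X}(-1)^{|T|-|X|}h(T) = \sum_{J \subseteq X}(-1)^{|X|-|J|}g(J).$$
Reparametrising the first sum by $T = X\cup S$ with $S\subseteq[n]\setminus X$ produces the shape of Equation \ref{coroR1}, while the second sum already has the shape of Equation \ref{coroR2}. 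The combinatorial content therefore reduces to proving the two closed-form identities
$$h(T) = (n-|T|)!\prod_{i=1}^{|T|}(t_i - i) \qquad \textrm{and} \qquad g(J) = \alpha(J)\hat{!}$$
for $T = \{t_1 < \cdots < t_k\}$ and $J = \{x_1 < \cdots < x_k\}$.

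For $h(T)$, the key point is that forcing each $t\in T$ to be a descent top amounts to prescribing an immediate right-neighbour of $t$ strictly smaller than $t$. Processing the $t_i$ in increasing order and choosing a follower $v_i$ for each, the admissible choices are $\{1,\dots,t_i-1\}\setminus\{v_1,\dots,v_{i-1}\}$; since every previously selected follower lies strictly below $t_i$ by construction, there are exactly $t_i - i$ choices, yielding the product $\prod_{i=1}^{k}(t_i - i)$. The arrows $t_i \mapsto v_i$ decompose into disjoint strictly decreasing chains with heads in $T$ and tails in $[n]\setminus T$; a short double-count (chains of total length $k + m$ for $m$ the number of chains, plus $n - k - m$ free singletons) shows that the permutation is built from exactly $n - k$ units, to be arranged freely in $(n-k)!$ ways. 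This is a bijection because the follower of every $t$ is uniquely recoverable from $\sigma$, which gives the formula for $h(T)$.

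For $g(J)$, the approach is to exploit the maximal ascending-run decomposition: a permutation with $\mathcal{DT}(\sigma)\subseteq J$ splits into at most $k+1$ ascending blocks whose first-$k$ maxima, when attained, lie in $J$. I would encode $\sigma$ by recording, for each value $v\in[n]$, the index $b(v)\in\{1,\dots,k+1\}$ of the ascending block it occupies. For a value $v$ sitting in an interval $(x_{j-1}, x_j]$ (with $x_0 = 0$) the admissible slots turn out to be exactly $k - j + 2$ in number, while the values above $x_k$ are forced into the terminal block; grouping the product of admissibility counts by the intervals $(x_{j-1},x_j]$ and handling the contribution of $x_1$ reproduces $(k+1)^{x_1 - 1}k^{x_2-x_1}\cdots 2^{x_k - x_{k-1}} = \alpha(J)\hat{!}$.

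The main obstacle, in my view, is the $g(J)$ bookkeeping: making the block-assignment map a genuine bijection with the permutations counted, and verifying that the admissibility conditions really reduce to the per-value counts $k - j + 2$, requires careful handling of the interaction between the forced placement of the elements of $J$ and the ascending-run structure of the final permutation. By contrast, the $h(T)$ count is cleaner, as the follower construction is a manifest disjoint-chain decomposition, and, once both $h$ and $g$ are in place, the two Möbius inversions in the first paragraph are routine.
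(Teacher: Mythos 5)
Your overall architecture is sound: the two displayed sums are exactly the Möbius inversions of $h(T)=\#\{\sigma\in\mathfrak{S}_{n}:T\subseteq\mathcal{DT}(\sigma)\}$ and $g(J)=\#\{\sigma\in\mathfrak{S}_{n}:\mathcal{DT}(\sigma)\subseteq J\}$, the reparametrisation $T=X\cup S$ matches Equation \eqref{coroR1}, and your proof of $h(T)=(n-|T|)!\prod_{i=1}^{|T|}(t_{i}-i)$ (choose a smaller follower for each $t_{i}$ in increasing order, giving $t_{i}-i$ options, then arrange the resulting $n-|T|$ decreasing chains and singletons freely) is correct. Note that the paper itself does not prove this corollary but imports it from \cite{ale1}, so I can only assess your argument on its own terms.

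The genuine gap is the closed form $g(J)=\alpha(J)\hat{!}$, and it is not merely a bookkeeping issue at $x_{1}$ as you suggest. First, the per-value count you assert is wrong as stated: if every $v\in(x_{j-1},x_{j}]$ really had $k-j+2$ admissible blocks, the product would be $(k+1)^{x_{1}}k^{x_{2}-x_{1}}\cdots 2^{x_{k}-x_{k-1}}$, which exceeds $\alpha(J)\hat{!}$ by a factor $k+1$; in any correct accounting it is each element $x_{j}\in J$ (not only $x_{1}$) that loses exactly one option, having $k-j+1$ admissible choices, and these losses regroup (each $x_{j-1}$ pairing with the interval above it) to give precisely the exponents $x_{1}-1,x_{2}-x_{1},\dots,x_{k}-x_{k-1}$. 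Second, the coordinate you propose, the index of the ascending run containing $v$, is not well defined with independent constraints: the number of runs varies with $\sigma$, not every $x_{j}$ closes a run, and the runs closed by the various $x_{j}$ need not appear in increasing order of $j$, so admissibility of a block for one value depends on where the other values are placed. A clean repair in the same spirit: build $\sigma$ by inserting the values $n,n-1,\dots,1$ in decreasing order; when $v$ is inserted, its immediate predecessor in the final word is forced to be a descent top, so the legal positions are exactly the front of the current word or immediately after an element of $J$ larger than $v$, i.e. $1+\#\{x\in J:x>v\}$ choices, independently for each $v$, and this is a bijection onto the permutations with $\mathcal{DT}(\sigma)\subseteq J$. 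The product of these counts over $v\in[n]$ is exactly $\alpha(J)\hat{!}$, after which your inversion $\sum_{J\subseteq X}(-1)^{|X\smallsetminus J|}g(J)$ yields Equation \eqref{coroR2}.
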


As in this article we are not really interested in the sets $R(n,X)$ themselves because we only need their cardinalities, we will abuse the notation in what follows and refer to the cardinal $|R(n,X)|$ of $R(n,X)$ simply as $R(n,X)$. Now we can apply the Corollary \ref{coroR} above to the cases $|X|\in\{1,2,3\}$ fixing $n$ so we can shorten further $R(X):=R(n,X)$. In this way, we obtain the following values.

\begin{computacion}\label{rx}
$R(X)$ equals
\begin{enumerate} 
    \item $2^{x_{1}-1}-1$ for $X=\{x_{1}\}$.
    \item $3^{x_{1}-1}2^{x_{2}-x_{1}}-(2^{x_{1}-1}+2^{x_{2}-1})+1$ for $X=\{x_{1}<x_{2}\}$.
    \item $4^{x_{1}-1}3^{x_{2}-x_{1}}2^{x_{3}-x_{2}}-(3^{x_{1}-1}2^{x_{2}-x_{1}}+3^{x_{2}-1}2^{x_{3}-x_{2}}+3^{x_{1}-1}2^{x_{3}-x_{1}})+(2^{x_{1}-1}+2^{x_{2}-1}+2^{x_{3}-1})-1$ for $X=\{x_{1}<x_{2}<x_{3}\}$.
\end{enumerate}
\end{computacion}

Using these values we can finally compute the $L$-forms that we need to build the spectrahedral relaxation. In order to simplify the notation, we fix $n$ from now on and call $p:=A_{n}(\mathbf{x},\mathbf{1})$. This $p$ is already normalized since $A_{n}(\mathbf{0},\mathbf{1})=1.$

\begin{computacion}
Fix $i,j\in\{0\}\cup[n].$ The value at position $ij$ of the LMP produced by the relaxation is $$\left(\sum_{k\in\{0\}\cup[n]}x_{k}L_{p,d}((x_{k}x_{i}x_{j})|_{x_{0}=1})\right)\bigg{|}_{x_{0}=1}.$$ We will then calculate here the values of the $L$-form over monomials $m$ of degree up to three. For the computation of these values we use the formulas obtained in \cite[Example 3.5]{main} together with Computation \ref{rx}. Thus $L_{p}(m)$ equals, for $i,j\in[n]$,
\begin{enumerate}
    \item $n$ when $m=1$.
    \item $2^{i}-1$ when $m=x_{i}$.
    \item $(2^{i}-1)^{2}$ when $m=x_{i}^{2}$.
    \item $2^{i+j} - 2^{j-i} \cdot 3^{i}$ when $m=x_{i}x_{j}$ with $i<j$.
    \item $(2^{i}-1)^{3}$ when $m=x_{i}^{3}$.
    \item $\frac{1}{3}\cdot 2^{-3 + j - i} (-2 + 2^{i+1}) (-4 \cdot 3^{i+1} + 3 \cdot 4^{i+1})$ when $m=x_{i}^{2}x_{j}$ with $i<j$.
    \item $\frac{1}{3}\cdot2^{-3 + i - j} (-2 + 2^{i+1}) (-4 \cdot 3^{j+1} + 3 \cdot 4^{j+1})$ when $m=x_{i}^{2}x_{j}$ with $j<i$.
    \item $2^{i+j+k} - 2^{j+k-i} \cdot 3^{i} - 2^{-2 + (i+1) - (j+1) + (k+1)} \cdot 3^{j} + 2^{-3 + 2 (i+1) - (j+1) + (k+1)} \cdot 3^{j-i}$ when $m=x_{i}x_{j}x_{k}$ with $i<j<k$. 
    \end{enumerate}
\end{computacion}

There is a difference with the computations in \cite{ale1}. It is a good idea to note this to avoid confusions. The difference is related to the way chosen to introduce Eulerian polynomials in each article.

\begin{warning}[Comparison with expressions in \cite{ale1}]
Notice that the appearance of ghost variables in \cite{ale1} forced us to have different formulas than here. This is so because our introduction of Eulerian polynomials here has not been combinatorial and therefore our polynomials do not show ghost variables. Thus we had to translate $+1$ the indices of the formulas in \cite{ale1}. That is the only change we performed and the reason why the formulas in the computation above look slightly different than in \cite{ale1}. The variable $x_{1}$ is now a \textit{completely appearing} variable and $A_{n}$ has $n$ variables $(x_{1},\dots,x_{n})$ \textit{actually appearing}. Indeed, $x_{i}$ here corresponds with $x_{i+1}$ in \cite{ale1} for $i\in[n]$ and $x_{1}$ is the ghost variable in \cite{ale1}. 
\end{warning}

With these values we can now construct the spectrahedral relaxation of the RZ multivariate Eulerian polynomials. The next step is to extract bounds for the extreme roots of the univariate Eulerian polynomials, which sit along the diagonal. This is the topic of the next section.

\section{Extraction of bounds from the spectrahedral relaxation}\label{extr}

The strategy to effectively extract bounds for the univariate Eulerian polynomials from the spectrahedral relaxation uses the fact that these polynomials are in the diagonal of the multivariate RZ polynomials. Additionally, the fact that univariate Eulerian polynomials are palindromic allows us to transform the lower bound for the rightmost root into an upper bound for the leftmost root. Moreover, to extract bounds we have to linearize because it is not practical to deal with the LMP or its determinant. Thus the way we obtain a bound is by guessing sequences of approximating generalized eigenvectors.

\begin{notacion}[LMP obtained]\label{LMP}
The spectrahedral relaxation of the $n$-th multivariate Eulerian polynomial $A_{n}(\mathbf{x})\in\mathbb{R}[\mathbf{x}]$ gives the LMP $$M_{n}(\mathbf{x}):=M_{n,0}+\sum_{j=1}^{n}x_{j}M_{n,j}\in\sym_{n+1}[\mathbf{x}].$$
\end{notacion}

By the properties of the spectrahedral relaxation we know that $M_{n,0}$ is PSD (see \cite{main}). Fixing $x_{i}=x$ for all $i\in[n]$ we go to the diagonal, where the original univariate Eulerian polynomial $A_{n}(x)\in\mathbb{R}[x]$ lies.

\begin{notacion}[LMP in the diagonal]\label{LMPdia}
At the diagonal we have the univariate LMP $$M_{n}(x,\dots,x)=M_{n,0}+x\sum_{i=2}^{n+1}M_{n,i}:=M_{n,0}+xM_{n,\suma}\in\sym_{n+1}[\mathbf{x}].$$ 
\end{notacion}

Looking to this diagonal gives us therefore a classical generalized eigenvalue problem from which it is now easy to extract bounds. We explain now this procedure to obtain many different linear inequalities which consists in performing an approximate analysis of the corresponding kernel.

\begin{procedimiento}[Obtaining inequalities]
\label{procesobound}
A LMP $$p(x)=A+xB\in\sym_{n}(\mathbb{R})[x]$$ with $A$ a PSD matrix verifies $$v^{\top}(A+xB)v=v^{\top}Av+xv^{\top}Bv\geq0$$ for all $v\in\mathbb{R}^{n}$ and $x\in\rcs(\det(p))$. Thus, for any $v\in\mathbb{R}^{n}$, this choice gives us inequalities of the form $a+xb\geq0$ (one for each $v$). We can rearrange these inequalities solving for $x$ into $x\geq\frac{-a}{b}$ whenever $b>0$. This therefore provides a lower bound for the RCS $\rcs(\det(p))$ of $\det(p)$. In short, $x\in\rcs(\det(p))$ must verify $x\geq\frac{-a}{b}.$
\end{procedimiento}

This is the procedure followed in \cite{ale1} to obtain bounds through the use of the linearizing sequence of vectors $\{(y,1,-\mathbf{1})\in\mathbb{R}^{n+1}\}_{n=1}^{\infty}.$ Here we improve on this vector. To verify and confirm such improvement we will recall the best bound obtained in \cite{ale1} so that we can further on establish comparisons with such bound here. In order to work towards establishing these comparisons we will need to use some concepts from asymptotic expansions. We briefly introduce then for clarity in the next short section.

\section{Rudiments of asymptotic expansions}\label{asymp}

In order to express sequences in terms of their asymptotic expansions we need to establish the scale we are going to use to write these expansions. Our scale is the most natural one for the problem we are treating.

\begin{convencion}[Asymptotic scale]\label{expsca}
Our \textit{asymptotic scale}  (see \cite[Subsection 1.1.2]{paris2001asymptotics}) will be the one given by functions of the form $a^{n}$ for $a\in\mathbb{R}_{\geq0}$. Notice the importance of taking $a\geq0$ and the fundamental difference in the limit of these functions when $a>1$ or $a<1$. This is fundamental for the improvement exposed in this article with respect to what is obtained in \cite{ale1}.
\end{convencion}

In order to stay consistent, it is important to stay with one scale once we have chosen it. We fix such scale across the whole text and discussion about these extreme roots of univariate Eulerian polynomials. This will guarantee the interoperability of our arguments along our whole theory of bounds for these polynomials. Thus we will avoid having to repeat work that was already done somewhere else.

\begin{remark}[Always the same scale]
We fix the scale in Convention \ref{expsca} above all the time in the sense that, whenever we compute some term of the growth of a sequence in this scale, we understand that we do not ever change the scale when we extract more terms of the asymptotic expansion of that same sequence. Thus, our choice of the scale used is homogeneous, stable and invariable throughout our whole analysis all the time. This also makes sense since the scale established and convened above is the most natural one for studying and approaching our bounding problem of the extreme roots of the univariate Eulerian polynomials due to the natural expression of the growth of these roots in it.
\end{remark}

Once we have this natural asymptotic scale we need notation to shorten our treatment of limits. This notation is very common in the asymptotics literature and we recall it here.

\begin{notacion}[Comparisons of limits] \cite{khoshnevisan2007probability}\ \label{insteadO}
Let $f,g\colon\mathbb{N}\to\mathbb{R}_{>0}$ be sequences taking positive values. We write $f\sim g$ if $\lim_{n\to\infty}\frac{f(n)}{g(n)}=1$ and $f(n)=o(g(n))$ as $n\to\infty$ if $\lim_{n\to\infty}\frac{f(n)}{g(n)}=0.$
\end{notacion}

Now we can already recall the best bound obtained in \cite{ale1} since, thanks to this short section, we finally have all the initial tools necessary to establish comparisons with such bound here. The recalling part is what we do in the next section since the comparison part will need some further preparation.

\section{Previous bound}\label{pre}

It was proven in \cite{ale1} that the multivariate bound obtained through the linearizing sequence of vectors $\{(y,1,-\mathbf{1})\in\mathbb{R}^{n+1}\}_{n=1}^{\infty}$ following the method described above was just slightly better than the best univariate bound. In particular, such bound was given as follows.

\begin{lema}[Bound using the sequence of vectors $\{(y,1,-\mathbf{1})\in\mathbb{R}^{n+1}\}_{n=1}^{\infty}$]\label{formofboundfirstwinner}
The bound for the leftmost root of the $n$-th univariate Eulerian polynomial obtained through the linearization of its relaxation in the diagonal by the sequence of vectors $\{(y,1,-\mathbf{1})\in\mathbb{R}^{n+1}\}_{n=1}^{\infty}$ is of the form $q_{n}^{(n)}\geq-\frac{D}{N}$ with $D=$ \begin{gather*}
   10 - 2^{2 + n} + 2^{2 + 2n} - 2 \cdot 3^{1 + n} + n + 4y - 2^{1 + n}y + 
 n y + y(4 - 2^{1 + n} + n + ny)
\end{gather*} and $N=$ \begin{gather*}
    -10 + 2^{3 + n} - \frac{1}{3}2^{3 + 2n} - \frac{1}{3}2^{4 + 2n} + \frac{1}{7}2^{4 + 3n} + \frac{1}{7}2^{5 + 3n} +\\ 2 \cdot 3^n - 4 \cdot 3^{1 + n} + 2 \cdot 3^{2 + n} - \frac{1}{5}2^{1 + n}3^{3 + n} - 4^{1 + n} + 4^{2 + n} - \frac{6^{2 + n}}{5} +\\ \frac{8^{1 + n}}{7} - n - 8y - 2^{2 + n}y + 2^{3 + n}y - \frac{1}{3}2^{3 + 2n}y - \frac{1}{3}2^{4 + 2n}y +\\ 4 \cdot 3^{1 + n}y - 2ny - 2y^2 + 2^{1 + n}y^2 - ny^2,
\end{gather*} where $y=\frac{-b-\sqrt{b^2-4ac}}{2a}$ with $N'D-ND':=ay^2+by+c.$
\end{lema}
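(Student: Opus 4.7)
The plan is to apply Procedure \ref{procesobound} directly to the diagonal LMP $M_n(x,\dots,x) = M_{n,0} + xM_{n,\suma}$ from Notation \ref{LMPdia}, using the specific vector $v = (y, 1, -\mathbf{1}) \in \mathbb{R}^{n+1}$. This produces inequalities of the form $a(y) + xb(y) \geq 0$ with $a(y) = v^{\top}M_{n,0}v$ and $b(y) = v^{\top}M_{n,\suma}v$, quadratic polynomials in $y$. Separating the leading $y^2$, linear-in-$y$ and $y$-free contributions according to whether the indices of $v$ are in the position $0$ (the $y$-slot), the position $1$ (the $1$-slot), or positions $2$ through $n+1$ (the $-\mathbf{1}$-slots) lets us reduce $a(y)$ and $b(y)$ to sums of $L$-form values computed in Section \ref{count}.

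First I would set up the $(n+1)\times(n+1)$ quadratic forms for $M_{n,0}$ and $M_{n,\suma}$ explicitly. The entry $(M_{n,0})_{ij}$ equals $L_p(x_ix_j)$ while $(M_{n,\suma})_{ij} = \sum_{k=1}^{n} L_p(x_ix_jx_k)$ (with the usual convention $x_0 = 1$). Then $v^{\top}M_{n,0}v$ splits into: a $y^2$ term coming from $L_p(1)=n$, two cross terms $2y\cdot L_p(x_1)$ and $-2y\sum_{i=2}^n L_p(x_i)$ from pairing the $y$-slot with the rest, a term $L_p(x_1^2)$, cross terms $-2\sum_{i=2}^n L_p(x_1 x_i)$, and the full double sum $\sum_{i,j=2}^n L_p(x_ix_j)$ from the $-\mathbf{1}$-block. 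An analogous but heavier decomposition applies to $v^{\top}M_{n,\suma}v$, producing single, double and triple sums of $L$-form values. Substituting the closed forms from Computation \ref{rx} (powers of $2,3,4$ along with shifts), each internal sum collapses to a geometric series and yields, after routine simplification, the expressions $D$ and $N$ listed in the statement. The correspondence $D/N = -a/b$ (up to an overall sign) is forced by the rearrangement step in Procedure \ref{procesobound}.

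With $-D/N$ in hand as a function of $y$, I would then optimize. Since both $D$ and $N$ are quadratic in $y$, the critical point condition $(D/N)'=0$ becomes $N'D - ND' = 0$, itself a quadratic $ay^2+by+c=0$ (this is exactly how the statement encodes the optimal $y$). The appropriate root is selected by the sign conditions that make the linear inequality actually give a lower bound, namely $b(y)>0$ after fixing sign, which corresponds to the branch $y = \frac{-b-\sqrt{b^2-4ac}}{2a}$ indicated in the lemma. Finally the lower bound for the rightmost root translates into a bound for the leftmost root $q_n^{(n)}$ by the palindromic symmetry of $A_n$ noted at the beginning of Section \ref{extr}.

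The main obstacle is entirely computational rather than conceptual: one must bookkeep the three different index regimes $(i<j,\ i=j,\ i>j)$ appearing in the formulas for $L_p(x_i^2 x_j)$ and $L_p(x_ix_jx_k)$ and verify that the many partial geometric sums combine into the precise combination of $2^n$, $3^n$, $4^n$, $6^n$, $8^n$ and polynomial-in-$n$ terms listed. A secondary subtlety is justifying that the selected branch of $y$ is a minimum (not maximum) of $-D/N$ and that it lies in the region where $b(y)>0$, so that the inequality produced by Procedure \ref{procesobound} is indeed the tightest valid one the vector $v$ can yield; this is a sign analysis on $a,b,c$ as functions of $n$, straightforward once the asymptotics from Section \ref{asymp} are applied.
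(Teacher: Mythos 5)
Your proposal is correct and follows essentially the same route as the source: the paper recalls this lemma from \cite{ale1} without reproving it, but both \cite{ale1} and the analogous Proposition~\ref{newboundcomplicatedvector} here proceed exactly as you do, expanding $v^{\top}M_{n,0}v$ and $v^{\top}M_{n,\suma}v$ into $L$-form values via Procedure~\ref{procesobound}, collapsing the resulting geometric sums into the stated $D$ and $N$, and then choosing $y$ from the quadratic $N'D-ND'=ay^{2}+by+c$. The only caveats are bookkeeping ones you already flag (index regimes in the $L$-form formulas and the sign/branch analysis for $y$), so nothing essential is missing.
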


Calling the bound above $\mult_{v}$ and the best univariate bound obtained from the relaxation applied to the univariate relaxation $\un$, it was proven in \cite{ale1} that the difference of bounds $\mult_{v}-\un\sim\frac{1}{2}\left(\frac{3}{4}\right)^n.$ Thus we see immediately that we actually obtain an improvement.

However this improvement vanishes at infinity. We want to change that using a better sequence of vectors for linearizing.

We pursue this because the numerical experiments that we can perform with small numbers tell us that this improvement should be bigger and, indeed, the improvement should actually increase. And, although the numerics tell this only for the small numbers over which actual numerical computations are doable, there does not seem to be any theoretical obstacle or hidden phenomena that could diminish and weaken the growth of the actual difference of bounds.

This tells us that this vanishing of the improvement is an artifact of a bad choice of our sequence of guesses of generalized eigenvectors in \cite{ale1}. Hence, a more structured and elaborated choice could potentially show us that the actual improvement is far greater by giving us a certificate of improvement that at least does not vanish so quickly at infinity.

In fact, the room for improvement in this certificate is big and we will indeed see that we can get a certificate of the fact that this difference does not only not decrease towards $0$ so fast at infinity, but that it actually grows. More than that, in fact: it grows exponentially.

Finding the correct choice of guesses of generalized eigenvectors to obtain such certificate is the topic of the next section. There we will explain how we manage to do it and the mechanisms and strategies we use in order to establish nice guesses providing better certificates of how big actually is the improvement given by the multivariate relaxation.

\section{Guessing  eigenvectors with care for effective multivariability}\label{guess}

Here the spectrahedral relaxation will finally show that going multivariate is a decisive factor in order to greatly improve the bounds we get. This section will be heavily computational. We will deal here with computations both symbolically and numerically because we will need to perform many experiments in order to isolate a good sequence of approximated generalized eigenvectors. Fortunately, we will see that this is doable. For ease of notation, however, we will only pursue this for even degree, as this simplifies the form of the sequence of vectors we find. This means that parity will play a role due to the form of the sequence of vectors we will deal with. In any case, we observe that the odd case follows similarly changing minimally the structure of the corresponding sequence of guesses of approximated eigenvectors. Moreover, the fact that the roots of the univariate Eulerian polynomials interlace, allow us to say already a lot about all the extreme roots just looking at the extreme roots of the even-indexed Eulerian polynomials.

\begin{convencion}[Even case]
All in all, this convenient restriction on the parity of the indices we analyze means that here we will explicitly prove that the bounds get apart for $n=2m$ even but we only do this for practical reasons related to the shape of the sequence of vectors that we will manage, which will also ease our computations.
\end{convencion}

Additionally, in order to avoid repeating much of the work made in \cite{ale1}, we will not optimize a new $y$ and we will use directly the value obtained there. This means that we leave as an exercise for the interested reader to compute the optimal $y$ for our new sequence of vectors. The procedure to do this is not different from the one followed in \cite{ale1}. Nevertheless, for our objectives here in this article, the $y$ optimized in \cite{ale1} already fulfills what we want.

In particular, we will see that the sequence of differences between the sequence of bounds obtained for this $y$ and the univariate bound already goes to infinity with exponential growth when $n$ goes to infinity over the even-indexed terms. That is, even without the optimal $y$ for this case, we already obtain that the multivariate relaxation is explosively (exponentially) better than the univariate one when we want to measure that improvement as accuracy in the bounds for the extreme roots of the univariate polynomials lying in the diagonal (the univariate Eulerian polynomials).

\begin{observacion}[Previous optimal fulfills]
The optimal value obtained for the first entry $y$ (over which we had to maximize the absolute value of the multivariate bound) in \cite{ale1} will be used also here. This will save us a lot of work and will already be enough for our purposes of certifying an explosive (exponential) improvement in this article with respect to the bound obtained in \cite{ale1}.
\end{observacion}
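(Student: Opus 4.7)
The plan is to verify the observation by running the same linearization/rearrangement scheme as in Lemma~\ref{formofboundfirstwinner}, but now with the refined sequence of guesses $w_n := \bigl(y,(-2^{m-i})_{i=3}^{m},(0,\tfrac{1}{2}),(1)_{i=1}^{m}\bigr) \in \mathbb{R}^{n+1}$ announced in the abstract (for $n=2m$), and with $y$ fixed to exactly the value produced in \cite{ale1}. Concretely, first I apply Procedure~\ref{procesobound} to the diagonal LMP $M_n(x,\dots,x)=M_{n,0}+xM_{n,\suma}$ of Notation~\ref{LMPdia}, tested against $w_n$, to produce a linear inequality $a_n+xb_n\ge 0$ whose rearrangement $x\ge -a_n/b_n$ gives a new candidate lower bound $\mult_w$ for the leftmost root.

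The second step is an explicit asymptotic evaluation. Using the $L$-form values of Section~\ref{count}, both $a_n=w_n^\top M_{n,0}w_n$ and $b_n=w_n^\top M_{n,\suma}w_n$ become finite linear combinations of products of geometric sums in the bases $2,3,4,6$: the entries of $w_n$ are each of the form $\pm 2^k$, $0$, $\tfrac{1}{2}$ or $1$, and the relevant values of $L_p$ on monomials of degree $\le 3$ are already explicit exponentials in these bases. Each such product has a well-defined leading base in the scale of Convention~\ref{expsca}. Substituting the $y$ from Lemma~\ref{formofboundfirstwinner}, simplifying the quotient $-a_n/b_n$, and subtracting $\un$ from $\mult_w$ should leave a dominant term of base strictly greater than $1$, which by Notation~\ref{insteadO} establishes $\mult_w-\un \sim C\,a^n$ with $a>1$ and $C>0$, that is, the promised exponential separation.

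The main obstacle is bookkeeping. The vector $w_n$ splits into three bands --- the dyadic tail $-2^{m-i}$ for $i=3,\dots,m$, the transition pair $(0,\tfrac{1}{2})$, and the constant block of $m$ ones --- so each of $a_n,b_n$ decomposes into $3\times 3$ blockwise contributions, and within each block one must sum geometric series in several bases simultaneously while tracking the index shift forced by the transition pair. A secondary concern is that the specific $y$ from \cite{ale1} was optimized for the much coarser sequence $(y,1,-\mathbf{1})$, so one must check that, for $w_n$, the coefficient $b_n$ remains positive so that the inequality can be rearranged as a lower bound, and that the branch $y=\frac{-b-\sqrt{b^2-4ac}}{2a}$ still gives a real leading value under the new $a,b,c$ determined by $w_n$. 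These two verifications are essentially the content postponed to Section~\ref{manag} of the paper; once they are in hand, comparing the dominant base of $\mult_w$ to that of $\un$ is a mechanical computation that yields the claimed explosive improvement.
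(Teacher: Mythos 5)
Your plan is essentially the paper's own route: compute the linearized bound for the new vector with arbitrary $y$ (Proposition~\ref{newboundcomplicatedvector}), substitute the $y$ from \cite{ale1} (up to the sign flip the paper performs, taking the opposite of that $y$ to match the new vector's sign conventions) while checking $N>0$ and $D>0$ in Section~\ref{manag}, and then estimate the growth of $\mult_{v}-\un$. The only understatement is calling the final comparison ``mechanical'': since both bounds share the same leading asymptotic term, the separation lives deeper in the scale and the paper has to extract the surviving terms by repeated (chained) conjugation of the radicals before arriving at $\frac{3}{8}\left(\frac{9}{8}\right)^{m}$.
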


After many numerical experiments, we are able to identify a candidate sequence of vectors for a good guess. In particular, we claim that the multivariate bound obtained through the sequence of vectors $$v:=\{(y,(-2^{m-i})_{i=3}^{m},(0,\frac{1}{2}),(1)_{i=1}^{m})\in\mathbb{R}^{n+1}\}_{n=1}^{\infty}$$ diverges apart from the corresponding optimal univariate bound. To see this, we have to compute terms again, as in \cite{ale1}.

\begin{remark}[Room for improvement]
It is clear that the improvement cannot occur at the first term of the asymptotic growth in the asymptotic scale used in \cite{ale1}. This cannot happen because the bounds obtained already in \cite{ale1} match the first asymptotic growth term of the extreme roots in this asymptotic scale. We therefore take differences because this decoupling of the bounds will occur at some other term deeper in the asymptotic scale.
\end{remark}

We focus now on our new sequence and name things accordingly. We make this clear in the following notation.

\begin{notacion}
We center our attention into the sequence of vectors $$v:=\{(y,(-2^{m-i})_{i=3}^{m},(0,\frac{1}{2}),(1)_{i=1}^{m})\in\mathbb{R}^{n+1}\}_{n=1}^{\infty}$$ and therefore from now on $\mult_{v}$ is the bound obtained through this new linearization of the relaxation.
\end{notacion}

We obtained this sequence of vectors through numerical experimentation. We briefly explain the main points and tasks involved in the process of realizing the experiments that allowed us to find this new sequence of vectors.

\begin{experimento}[Guessing the good sequence of vectors]
We compute the relaxation for small instances of the multivariate Eulerian polynomials, that is, setting $n=1,2,3,\dots$. The first thing we do is looking at the relaxation along the diagonal setting all the variables equal. Then we solve the corresponding generalized eigenvalue problem that this generates and take the rightmost (biggest) generalized eigenvalue, which is negative because we began with a relaxation of the original RCS and univariate Eulerian polynomials have negative roots. Then we compute the generalized eigenvector associated to such generalized eigenvalue. With this information, we can plot the entries of these eigenvectors equally spaced in the interval $[0,1]$. When we do this for several small values of $n$, we obtain the Figure \ref{figura}. \begin{figure}[ht]\label{figura}
\centering
\scalebox{.75}{\includegraphics{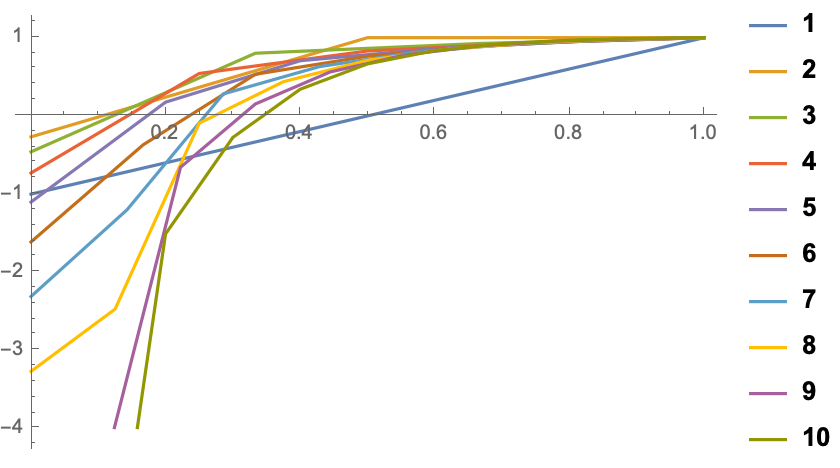}}
\caption{Representation of the entries of the eigenvectors in the interval $[0,1]$ for the relaxation corresponding to the multivariate Eulerian polynomials of degrees 1 to 10.}
\end{figure} The observation of this figure (and the generalized eigenvectors that form it) first shows us that we lack real control over what the first entry does. This is why we put there a $y$ that we will have to compute optimizing later. Now, for the rest of the entries we see that they grow from negative to positive. Passing from negative to positive, there is a jump that we identify as from $0$ to $\frac{1}{2},$ approximately. Finally, the last entries, these lying in the tails of the eigenvectors, seem to stabilize towards $1$. Thus, this analysis only lets open the question of how the growth in the head happens. Fortunately, it is easy to see that it goes through reducing the absolute value by powers of $2$. And this is how these experiments allowed us to construct a vector good enough to give us a precise enough approximation to the generalized eigenvector we need in order to linearize our problem and obtain the better asymptotic bounds that we will compute in this article. Ideally, we would have to compute the optimal value of $y$ depending on $n$. However, as we said, it turns out that the optimal $y$ computed before in \cite{ale1} is already good enough for our objectives here. This fact will simplify our work here by saving us from doing again these tedious optimization tasks in this article.
\end{experimento}

After seeing how we perform these experiments to search for nice enough choices for our sequence of linearizing guesses of approximated generalized eigenvectors, things are finally set in motion. The time of the computations can begin. First, we collect in the next section a few points to have in mind while proceeding with the aforementioned computations.

\section{Limitations, preparations, optimization and comparisons}\label{limit}

As we said, the improvement is limited in the asymptotic terms possible to improve since we know that the univariate bound already matches the first asymptotic growth term in the asymptotic scale introduced in Convention \ref{expsca} we are using here. We hope that this vector gives a difference with a growing term, contrary to what happened in \cite{ale1}, but this difference will therefore grow slower than $2^{n+1}$.

\begin{remark}[The growth of the difference]
As the absolute value of the leftmost root of the univariate Eulerian polynomials has dominant asymptotic term $2^{n+1}$ and the univariate relaxation also share this first term asymptotics growth, the fact that going multivariate can only improve the relaxation implies that the difference between these two bounds will have first term asymptotic growth vanishing against $2^{n}.$ This put a limit on how much better we can get precisely because we are already so close to the growth of the actual root just using the univariate relaxation.
\end{remark}

As in \cite{ale1}, we will run into software problems when we have to deal with expressions showing radicals and their differences. As there, we will have to use the power of conjugates in order to determine the actual surviving terms.

\begin{remark}[Management of radicals]
When doing asymptotics estimations using computer software it can happen that an expression of the form $c-\sqrt{b}$ verifies that the first asymptotic growth term $da^{n}$ of $c$ and $\sqrt{b}$ is the same. Therefore the way to deal with this correctly is by using conjugates so that we write $c-\sqrt{b}=\frac{(c-\sqrt{b})(c+\sqrt{b})}{c+\sqrt{b}}=\frac{c^{2}-b}{c+\sqrt{b}}$ so that we can now compute the asymptotic growth of the difference of squares $c^{2}-b$ without ignoring terms that remain alive while $c+\sqrt{b}\sim da^{n}+da^{n}=2da^{n}.$ This is the correct way of proceeding. This necessary procedure will make some computations really tedious and cumbersome, but there is no real way around this when we want to correctly estimate the growth of our sequences without ignoring terms that are alive deep into the asymptotic expansions of the sequences whose difference we are studying.
\end{remark}

If we do not proceed as above, the problems can be fatal. We can disregard asymptotic growth terms that are actually still strong and alive, making our computations wrong very easily.

\begin{warning}[Killing growth in differences of asymptotic terms]
In the case exposed in the remark above, to determine correctly the asymptotic growth of $c-\sqrt{b}$ we cannot just resort to the natural comparison by difference since it is not true in general that $c-\sqrt{b}\sim0$ in this case. What happens is that doing this naive (and wrong) difference of first term asymptotic growths we have ignored many asymptotic terms that go deeper into the expression of the asymptotic growth in the chosen scale.
\end{warning}

The procedure to check the first growth term of the huge expressions we encounter also deserves some noticing. It is a simple but tedious and dense task because we did not find Mathematica able to do that estimation of growth automatically in a symbolic way. For this reason, we had to do it manually checking slowly possible growth terms for very lengthy expressions. We expose here the best strategy we found to deal with this tedious task so that the reader can try (or improve) this process in case of interest or necessity.

\begin{procedimiento}[Checking asymptotic growth terms in lengthy Mathematica expressions]\label{proce}
As we noted above, the (correct) method of killing equal asymptotic terms using conjugates implies that expressions that are already lengthy will become even more so due to the necessity of taking squares of these expressions. In this case, the software is not good at finding the first asymptotic growth term and we had to do this tedious job manually. Moreover, in the next collection step, the software might collect several times against the same power of $2^{n}$, which introduces cancellations problems that one has to consider. All in all, with all this in mind, there are five simple but tedious steps to do in order to perform the computation of the first asymptotic growth term of these expressions in a reasonable amount of time. 

\textbf{1. Collection.} The expression \texttt{expr} initially obtained should be collected against the term $2^{n}$ using the command \texttt{Collect[expr,$2^n$]} so that such expression gets transformed into a sum of subexpressions $$\mbox{\texttt{expr}}=\sum_{i=1}^{s}\mbox{\texttt{expr}}_{i}$$ in which, generally, we would expect to have a different power of $2^{n}$ appearing in each subexpression $\mbox{\texttt{expr}}_{i}$, but in the actual computation this does not necessarily happen and a power of $2^{n}$ can appear in different subexpressions. The most important thing however is that, for each $i$, there is just one power of $2^{n}$ present in the expression $\mbox{\texttt{expr}}_{i}$. This easy fact simplifies and speeds up computations because the machine (and the reader) can therefore kill terms at first sight and using fewer instances. Surprisingly enough, the collection process using the command \texttt{Collect} is done fast by Mathematica. Moreover, from now on, instead of looking at the very long expression \texttt{expr}, we can look one by one at the elements of the tuple of subexpression $(\mbox{\texttt{expr}}_{1},\dots,\mbox{\texttt{expr}}_{s})$ thus obtained individually, which are \textit{just long} expressions.

\textbf{2. Exploration.} After performing this collection (a fast task), it will be easier to identify the subexpressions $\mbox{\texttt{expr}}_{i}$ having \textit{perceived} strongest growth terms and dominance. This does not mean that this exercise will be a fast task. However, under the collected terms, it is at least something that a human can perform in a reasonable amount of time simply exploring through $i\in\{1,\dots,s\}$. We can notice here that the usual pattern tends to put the subexpressions with dominant growth term under the indices $i$ lying around the middle of the ordered set $\{1,\dots,s\}$. So, in the collection of our initial expression by $2^{n}$ given by applying \texttt{Collect[expr,$2^n$]}, we are going to be looking for the dominant growth terms of the subexpressions $\mbox{\texttt{expr}}_{i}$ with $i\approx\frac{s}{2}$. But this is just an estimation, a good guess to begin exploring for the asymptotic growth term we search for. We might actually end up pretty far from that initially chosen expression $\mbox{\texttt{expr}}_{i}$ when we continue with our next points in this process of determination of the actual growth term of our whole expression \mbox{\texttt{expr}}. Once we have detected our suspected growth term, we can pass to next step.

\textbf{3. Estimation.} After we identify some simple and suitable candidate $i\approx\frac{s}{2}$ for an index containing the growth term of our whole expression \mbox{\texttt{expr}} in the collection \texttt{Collect[expr,$2^n$]} by $2^{n}$, we choose now inside the corresponding subexpression $\mbox{\texttt{expr}}_{i}$ the one term having the highest \textit{perceived} absolute growth in that subexpression. This is in fact easy. This term will therefore exhibit an asymptotic growth of the form $2^{kn}3^{pn}n^{r}$ for $k,p,r\in\mathbb{Z}$ giving an expression of our previously established and convened asymptotic scale in Convention \ref{expsca}. In order tom check that we are right with our choice and perception of this as a good candidate to establish the asymptotic growth of the whole expression, we can now first compute the limit of the chosen subexpression $\mbox{\texttt{expr}}_{i}$ of the collection \texttt{Collect[expr,$2^n$]} by $2^{n}$ against this term $2^{kn}3^{pn}n^{r}$ to ensure that we took actually the correct highest growth term in absolute value within such subexpression. This will give us $$\lim_{n\to\infty}\frac{\mbox{\texttt{expr}}_{i}}{2^{kn}3^{pn}n^{r}}=l.$$ If the limit $l$ is finite and not $0$, we are right in our choice. Otherwise, the choice we did is wrong and we have to revise it until we find something that provides us with a finite limit $l\in\mathbb{R}\smallsetminus\{0\}$. In general, if $l=0$, then the growth of the absolute value of the expression we look for should be smaller; if, otherwise, $l=\pm\infty$, then the growth of the absolute value of the expression we look for should be bigger. We proceed checking our choices in this way until we find a term that provides us with a finite limit $l\in\mathbb{R}\smallsetminus\{0\}$. With this choice of term of our asymptotic scale $2^{kn}3^{pn}n^{r}$ we can now pass to the next point.

\textbf{4. Taking limit.} Here we compute the limit growth of the whole expression \texttt{expr} against the term determined above $2^{kn}3^{pn}n^{r}$ in the asymptotic scale. We compute over the subexpressions so that we compute first the tuple of limits $$\left(\lim_{n\to\infty}\frac{\mbox{\texttt{expr}}_{i}}{2^{kn}3^{pn}n^{r}}\right)_{i=1}^{s}.$$ Then we keep subexpressions $\mbox{\texttt{expr}}_{i}$ corresponding to indices $i$ with $\frac{2^{in}\mbox{\texttt{expr}}_{i}}{2^{kn}3^{pn}n^{r}}=\pm\infty$ in case there are some, delete the rest and go to the previous case because there is therefore a higher growth competitor in these surviving expressions. This is so because, if the limit gives $\pm\infty$, then the term $2^{kn}3^{pn}n^{r}$ has small absolute growth for other subexpressions and therefore it cannnot be the correct growth term unless there are cancellations. These cancellation could well happen and we will learn how to deal with them soon. In this case of cancellations, we can delete copies of the estimated term inside subexpressions of the collection we chose in the third step and proceed to do another estimation on the remaining of the expression. This will be a fundamental step of the remaining case. This remaining case happens when all these limits are finite and sum up to a finite number $w\in\mathbb{R}$. There are two options. If that $w\neq0$, we are done and we can pass to the final step because we actually know then that the first growth term of our whole expression \texttt{expr} is therefore $w2^{kn}3^{pn}n^{r}$. Otherwise, the sum of all the limits above gives $w=0$ and the term $2^{kn}3^{pn}n^{r}$ suffers an actual cancellation along the whole original expression \texttt{expr}. This means that its growth is actually too big in absolute value growth for the expression and that it is actually not a term of the whole expression when expanded since it gets canceled out in such expansion. This cancellation means therefore that this term $2^{kn}3^{pn}n^{r}$ does not give the correct growth term. In this case, we go back to the original collection $$\mbox{\texttt{expr}}=\sum_{i=1}^{s}\mbox{\texttt{expr}}_{i}$$ and we modify the subexpressions to ensure that these terms are eliminated so they do not disturb our computations again. In particular, in these subexpression $\mbox{\texttt{expr}}_{i}$ for which we have $$\lim_{n\to\infty}\frac{\mbox{\texttt{expr}}_{i}}{2^{kn}3^{pn}n^{r}}\neq0$$ we delete all terms of the form $2^{kn}3^{pn}n^{r}$ so that we form in this way the new subexpression $\mbox{\texttt{expr}}_{i}'$. In this way, we kill all the terms in the subexpressions $\mbox{\texttt{expr}}_{i}$ that have anything to do with this canceled out term $2^{kn}3^{pn}n^{r}$ that is actually not appearing in the expansion of the whole expression \texttt{expr}. After finishing this process, we proceed to do another estimation in the third step on the remaining of the expression. This process forces us to use a different collected term in this new application of this step. We go therefore back to the third point. We continue doing this until the loop closes, which, in our examples, happened very fast, although the computations of the limits took some time computational time.

\textbf{5. Cut and kill.} After several iterations of this method (which were not more than two in our examples), we find a term $2^{kn}3^{pn}n^{r}$ that, when compared in the limit with the original expression \texttt{expr} we had, gives a finite nonzero value $w$. Our initial expression \texttt{expr} has therefore first growth term given by $w2^{kn}3^{pn}n^{r}$ and we have finished.

As we said, this process is tedious and lengthy, but this was the procedure that we found that allowed us to save the most computation, sanity and time. We did not find how to correctly do this automatically in Mathematica with such lengthy expressions as the ones we have to deal with. Although this is almost equivalent to searching for the fastest growing term in each collection against $2^{n}$ and then determine the biggest non-annihilated one by cancellation. However, this last procedure would imply more workload for the human because it would require to determine the growth for all the terms in the collection instead of trying to guess one each time until the limit does not kill everything (all terms going to $0$) or gets surpassed by something (some term going to $\infty$ in absolute value). An abstract explanation for the process would be to separate the collections by $2^{n}$ in a tuple, then choose some term $2^{kn}3^{pn}n^{r}$ that we could suspect to be the growth and compute the limit with respect to all the entries in the tuple only keeping those entries that do not get annihilated. If all are finite, then we just add them and if that operation is nonzero, that is our growth; otherwise we eliminate all these terms and proceed again. If some surviving term is infinite, we try again just looking at the infinite collection terms and its strongest terms without forgetting that some of these could actually cancel out so we have to keep in memory the full tuple in case we have to eliminate terms in the original tuple to go back to the previous step. As we see, this is indeed a simple but tedious and lengthy process that works but requires high doses of patience and stamina to go forth until the end of the exercise without losing sanity, calm, hope and serenity in the middle of any of the tasks involved in its complete resolution.
\end{procedimiento}

Fortunately, the readers do not have to go through the determination of these growths terms using the process described above for our examples here because we did this task for them. Once the growth term is established and you have it written down somewhere (as we collected them in this article), checking that these are the correct growth terms of the whole expressions involved is an easy task. In particular, this task just requires computing a limit. This can take some time but it is easily doable. Taking our next advice is doable even much faster.

\begin{advice}
This checking process is faster if the limit is taken over the expression written as a collection against $2^{n}$, as we did above.
\end{advice}

Otherwise it takes more time for Mathematica to do that limit because the program does not do the collection automatically and seems to get loss in a never-ending computation that actually finish much faster in the way we mention in our advice above. As before then, if we fail to proceed like we mentioned here, many problems can arise. In particular, we can run into highly inefficient uses of our time, energy and computing and with programs that run for too long unnecessarily. Using the simple tricks exposed above will thus save not just our sanity.

\begin{warning}[Extremely lengthy expression and time-consuming computations]
If we do not direct the program (and ourselves) to act through the collection step \texttt{Collect[expr,$2^n$]}, Mathematica do not do it automatically by itself and the program gets lost in never-ending computations when it tries to figure out the limit against the selected term in the asymptotic scale that we could suspect encodes the actual asymptotic growth of the expression \texttt{expr}. For this reason, it is extremely important not to jump over this step. Moreover, the elimination steps of Procedure \ref{proce} performed when some terms cancel out or fail to survive the limit also speed up the whole process with each iteration. This speedup is something that our sanity, our computer and ourselves will desire \textit{with our whole soul} while performing this task.
\end{warning}

Under normal circumstances, we would also have to optimize over the parameter $y$ finding a nice enough sequence providing an optimal bound to the extreme roots of the univariate Eulerian polynomials. However, we can skip this optimization process here because we do not need to do it for our objectives claimed in this article. But it is also doable through exactly the same process followed and showed in \cite{ale1}. Moreover, in the comparison process we will have to take care of expressions with radicals that will force us again to proceed with chained conjugations. That is, conjugations within conjugations.

\begin{remark}[Chain conjugations]
In some instances, the roots could not completely disappear after a first conjugation. In these cases, several rounds of conjugation might be necessary and each of these determines a growth that has to be treated with enough caution. This will happen, e.g., when there two different radical expressions $\sqrt{q}$ and $\sqrt{g}$ with $g$ and $q$ not easy to relate by algebraic means.
\end{remark}

Beware of the problems that can appear as a consequence of not following this process correctly. These problems are not easy to skip in such a tedious task.

\begin{warning}[Conjugate with care]
A failure while dealing with these (chained) conjugations could imply that we ignore asymptotic growth terms at some point in the chain process that actually survive. This may significantly and wrongly alter the whole computation producing a spurious and unreliable result about the asymptotic growth we are trying to figure out.
\end{warning}

These cautions will be helpful in order to have a smooth path along the next final sections. Following these simple advices will land us swiftly and safely into the ground our desired results.

\section{Form of the bound obtained through $(y,(-2^{m-i})_{i=3}^{m},(0,\frac{1}{2}),(1)_{i=1}^{m})\in\mathbb{R}^{n+1}$, pseudoptimization of $y$ and management of radicals}\label{form}

As in \cite{ale1}, we first have to compute the form of the linearization in order to analyze the bound. This computation is the content of this section. We recall, once again, that $n=2m$ so we are just looking at even indices (and degree) in the sequences of Eulerian polynomials.

\begin{proposicion}[Form of bound obtained for arbitrary $y$]\label{newboundcomplicatedvector}
The bound for the extreme root of the univariate Eulerian polynomial obtained through linearization of the spectrahedral relaxation applied to the corresponding multivariate Eulerian polynomial $A_{n}$ by the vector $(y,(-2^{m-i})_{i=3}^{m},(0,\frac{1}{2}),(1)_{i=1}^{m})\in\mathbb{R}^{n+1}$ is of the form $x_{n,r}\geq-\frac{D}{N}$ with $D=$ \begin{gather*}
    -\frac{1}{12} + 2^{3m} + 2^{2 + m} + 5 \cdot 2^{-3 + 2m} - 7 \cdot 2^{-1 + 2m} + 3 \cdot 2^{1 + 3m} - 2^{3 + 3m}\\ + \frac{1}{3} \cdot 2^{2 + 4m} + \frac{1}{3} \cdot 2^{3 + 4m}
 - 2 \cdot 3^{-1 + m} - 2^{4 + m} \cdot 3^{-1 + m} + 3^m - 2^{1 + m} \cdot 3^m\\ - 3^{1 + m} + 2^{2 + m} \cdot 3^{1 + m}
 - 2 \cdot 3^{1 + 2m} - \frac{11 \cdot 4^{-2 + m}}{3} + m - 2^{3m} \cdot m\\ - 5 \cdot 2^{-4 + 2m} \cdot m - 2^{-3 + 2m} \cdot m
 + 2^{-1 + 2m} \cdot m + 4^{-2 + m} \cdot m + 2^{-4 + 2m} \cdot m^2\\ (- 3 + 2^{-1 + m} + 2^{1 + m}
 - 2^{2 + m} + 2^{2 + 2m} - 2m - 2^{-1 + m} \cdot m)y + 2my^2
\end{gather*} and $N=$ \begin{gather*}
    \frac{1}{12} + 2^{2m} - 2^{3m} + 5 \cdot 2^{4m} - 2^{2 + m} + \frac{11}{3} \cdot 2^{-4 + 2m} - 5 \cdot 2^{-3 + 2m} - 7 \cdot 2^{-1 + 2m} \\
 + 3 \cdot 2^{1 + 2m} + 9 \cdot 2^{-3 + 3m} - 47 \cdot 2^{-2 + 3m} + 3 \cdot 2^{-1 + 3m} - 2^{2 + 3m} - \frac{1}{7} \cdot 2^{3 + 3m} \\
 + \frac{1}{7} \cdot 2^{4 + 3m} + \frac{5}{7} \cdot 2^{5 + 3m} - \frac{27}{5} \cdot 2^{-3 + 4m} + 5 \cdot 2^{-1 + 4m} - 2^{1 + 4m} + 2^{1 + 5m} \\
 + 3 \cdot 2^{2 + 5m} - 2^{4 + 5m} + \frac{1}{7} \cdot 2^{3 + 6m} + \frac{1}{3} \cdot 2^{4 + 6m} + \frac{1}{21} \cdot 2^{5 + 6m} + 2 \cdot 3^{-1 + m} \\
 - 11 \cdot 2^{2m} \cdot 3^{-1 + m} - \frac{1}{5} \cdot 2^{3 + m} \cdot 3^{-1 + m} + 2^{4 + m} \cdot 3^{-1 + m} + 13 \cdot 2^{2 + 2m} \cdot 3^{-1 + m} \\
 - 2^{5 + 3m} \cdot 3^{-1 + m} - 3^m - 2^{-1 + m} \cdot 3^m + 2^{1 + m} \cdot 3^m + 7 \cdot 2^{-1 + 2m} \cdot 3^m \\
 - 2^{2 + 3m} \cdot 3^m + 3^{1 + m} - 2^{2 + m} \cdot 3^{1 + m} - 2^{3 + 2m} \cdot 3^{1 + m} + 2^{3 + 3m} \cdot 3^{1 + m} \\
 - 2^{1 + 2m} \cdot 3^{2 + m} + 2^{-2 + 2m} \cdot 3^{3 + m} + 4 \cdot 3^{1 + 2m} - 2^m \cdot 3^{1 + 2m} - 2^{1 + m} \cdot 3^{1 + 2m} \\
 + 2^{2 + m} \cdot 3^{1 + 2m} - \frac{1}{5} \cdot 2^{2 + 2m} \cdot 3^{1 + 2m} + 6^m - 6^{1 + m} - \frac{13}{5} \cdot 6^{1 + 2m} \\
 - m - 2^{2m} \cdot m + 2^{3m} \cdot m + 5 \cdot 2^{-3 + 2m} \cdot m + 2^{-2 + 2m} \cdot m - 5 \cdot 2^{-2 + 4m} \cdot m \\
 - 2^{-1 + 4m} \cdot m + 2^{1 + 4m} \cdot m - 2^{1 + 5m} \cdot m + 2^{1 + 2m} \cdot 3^{-1 + m} \cdot m + 2^{-2 + 2m} \cdot 3^m \cdot m \\
 - 2^{-1 + 2m} \cdot 3^{1 + m} \cdot m + 2^{-1 + m} \cdot 3^{1 + 2m} \cdot m - 2^{-4 + 2m} \cdot m^2 + 2^{-3 + 4m} \cdot m^2 \\
 + y(3 - 3 \cdot 2^{-1 + m} + 2^m + 5 \cdot 2^{3m} + 2^{1 + m} - 2^{2 + 2m} + 2^{1 + 3m} \\
 - 2^{3 + 3m} + 2^{3 + 4m} - 2^{4 + m} \cdot 3^{-1 + m} - 2^{1 + m} \cdot 3^m + 2^{2 + m} \cdot 3^{1 + m} \\
 - 4 \cdot 3^{1 + 2m} + 2m + 2^{-1 + m} \cdot m - 2^{3m} \cdot m) +y^{2}(- 2 + 2^{1 + 2m} - 2m).
\end{gather*}
\end{proposicion}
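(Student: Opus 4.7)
The proof is a direct application of Procedure \ref{procesobound} to the diagonal LMP of Notation \ref{LMPdia}, taking the linearizing test vector to be
\[
v = (y,(-2^{m-i})_{i=3}^{m},(0,\tfrac{1}{2}),(1)_{i=1}^{m}) \in \mathbb{R}^{n+1}.
\]
By that procedure, the bound has the form $x \geq -v^{\top}M_{n,0}v/v^{\top}M_{n,\suma}v$, so we must identify $D = v^{\top}M_{n,0}v$ and $N = v^{\top}M_{n,\suma}v$. The whole statement then reduces to evaluating these two quadratic forms and matching the outcome with the stated closed-form expressions.

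For the expansion we split the index set $\{0\}\cup[n]$ into four blocks dictated by the structure of $v$: position $0$ carrying the value $y$, a head block of $m-2$ positions carrying the powers $-2^{m-i}$, a transition block of two positions carrying $0$ and $\tfrac12$, and a tail block of $m$ positions carrying ones. Using the mold structure of Definition \ref{rela} one has
\[
v^{\top}M_{n,0}v = \sum_{i,j \in \{0\}\cup[n]} v_i v_j\, L_p(x_i x_j)|_{x_0=1},
\]
and analogously $v^{\top}M_{n,\suma}v$ produces a sum over triples with weight $L_p(x_i x_j x_k)|_{x_0=1}$. Since $M_{n,0}$ involves only $L_p$ on monomials of degree at most $2$ and $M_{n,\suma} = \sum_{k=1}^{n} M_{n,k}$ involves $L_p$ on monomials of degree at most $3$, every required $L_p$ value has already been tabulated in the section on counting descent tops.

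Substituting those explicit $L_p$ values into each block-pair (resp.\ block-triple) sub-sum reduces the computation to a collection of finite geometric sums in the variables $2^i$, $3^i$, $4^i$ and $6^i$, or to finite polynomial sums of $m$ when the ones-tail is involved. Each such sum closes in elementary terms and the substitution does not introduce anything beyond the scale $2^{am}3^{bm}m^c$ already visible in the stated $D$ and $N$. The organizational subtleties to watch are: the formula for $L_p(x_i^2 x_j)$ splits into two cases depending on whether $i<j$ or $j<i$; the vanishing entry at position $m-1$ kills many but not all triples (those with a repeated index at $m-1$ still survive); and the ones-tail produces polynomial factors in $m$ that must be tracked carefully against the purely exponential contributions of the other blocks.

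Finally, we collect the accumulated expression against the scale $2^n$ as in the collection step of Procedure \ref{proce}, so that matching against the stated $D$ and $N$ can be done term by term. The \emph{hard part} is not conceptual but purely one of bookkeeping: in particular, the cross terms arising from the head block paired with the ones-tail (the products $(-2^{m-i}) \cdot 1$ generate mixed geometric sums of the form $\sum_i 2^{m-i} \cdot 2^{\alpha i + \beta j + \gamma k}$ that must be summed in closed form), together with the least symmetric block-triples involving positions $0$, $m-1$ and $m$, are the most error-prone. Once these terms are correctly catalogued and added into the rest, the identification with the explicit $D$ and $N$ of the statement is mechanical and the proof is complete.
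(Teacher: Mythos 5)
Your proposal follows essentially the same route as the paper: the bound is read off as $x\geq -v^{\top}M_{n,0}v/v^{\top}M_{n,\suma}v$ via Procedure \ref{procesobound} on the diagonal LMP, and both quadratic forms are expanded block-by-block according to the structure of $v$, with the tabulated $L$-form values of degree up to three substituted and the resulting geometric and polynomial sums closed and collected — exactly the computation the paper's proof writes out. One small slip in your bookkeeping remarks: the triples involving the zero entry at position $m-1$ that survive are precisely those where $m-1$ enters only through the diagonal summation index $k$ (terms $v_{i}v_{j}L_{p}(x_{m-1}x_{i}x_{j})$ with $i,j\neq m-1$), whereas any triple in which $m-1$ occupies a quadratic-form slot — in particular any with a repeated index at $m-1$ — is annihilated by the factor $v_{m-1}=0$.
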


\begin{proof}
The bound obtained through this new vector comes from an equation of the form $D+xN\geq 0$ with $D=$ \begin{gather*}y(yL_{p}(1)-\sum_{i=3}^{m}2^{m-i}L_{p}(x_{i-2})+\frac{1}{2}L_{p}(x_{m})+\sum_{i=m+1}^{n}L_{p}(x_{i}))-\\\sum_{j=3}^{m}2^{m-j}(yL_{p}(x_{j-2})-\sum_{i=3}^{m}2^{m-i}L_{p}(x_{i-2}x_{j-2})+\\\frac{1}{2}L_{p}(x_{m}x_{j-2})+\sum_{i=m+1}^{n}L_{p}(x_{i}x_{j-1}))+\\\frac{1}{2}(yL_{p}(x_{m})-\sum_{i=3}^{m}2^{m-i}L_{p}(x_{i-2}x_{m})+\\\frac{1}{2}L_{p}(x_{m}x_{m})+\sum_{i=m+1}^{n}L_{p}(x_{i}x_{m}))+\\\sum_{j=m+1}^{n}(yL_{p}(x_{j})-\sum_{i=3}^{m}2^{m-i}L_{p}(x_{i-2}x_{j})+\frac{1}{2}L_{p}(x_{m}x_{j})+\sum_{i=m+1}^{n}L_{p}(x_{i}x_{j}))
\end{gather*} and $N=$ \begin{gather*}y(y\sum_{k=1}^{n}L_{p}(x_{k})-\sum_{i=3}^{m}2^{m-i}\sum_{k=1}^{n}L_{p}(x_{i-2}x_{k})+\\\sum_{k=1}^{n}\frac{1}{2}L_{p}(x_{k}x_{m})+\sum_{i=m+1}^{n}\sum_{k=1}^{n}L_{p}(x_{k}x_{i}))-\\\sum_{j=3}^{m}2^{m-j}(y\sum_{k=1}^{n}L_{p}(x_{k}x_{j-2})-\sum_{i=3}^{m}2^{m-i}\sum_{k=1}^{n}L_{p}(x_{i-2}x_{j-2}x_{k})+\\\sum_{k=1}^{n}\frac{1}{2}L_{p}(x_{k}x_{j-2}x_{m})+\sum_{i=m+1}^{n}\sum_{k=1}^{n}L_{p}(x_{j-2}x_{k}x_{i}))+\\\frac{1}{2}(y\sum_{k=1}^{n}L_{p}(x_{k}x_{m})-\sum_{i=3}^{m}2^{m-i}\sum_{k=1}^{n}L_{p}(x_{i-2}x_{m}x_{k})+\\\sum_{k=1}^{n}\frac{1}{2}L_{p}(x_{k}x_{m}x_{m})+\sum_{i=m+1}^{n}\sum_{k=1}^{n}L_{p}(x_{m}x_{k}x_{i}))+\\\sum_{j=m+1}^{n}(y\sum_{k=1}^{n}L_{p}(x_{k}x_{j})-\sum_{i=3}^{m}2^{m-i}\sum_{k=1}^{n}L_{p}(x_{i-2}x_{j}x_{k})+\\\sum_{k=1}^{n}\frac{1}{2}L_{p}(x_{k}x_{j}x_{m})+\sum_{i=m+1}^{n}\sum_{k=1}^{n}L_{p}(x_{j}x_{k}x_{i})).\end{gather*} Computing these last expressions finishes the proof.
\end{proof}

As in \cite{ale1}, now we would only need to choose $y$ carefully enough so that this expression gives us an (explosive exponential) improvement with respect to previously computed bounds. It turns out that the $y$ computed in \cite{ale1} already does this job for us here.

\section{Managing radicals again and a comment on optimality}\label{manag}

As we told above, in this section we will consider the optimal $y$ computed for the case considered in \cite{ale1}. Now, as happened in \cite{ale1}, we will have to deal with the problems it introduces through the radicals in its expression. We basically take the $y$ in \cite{ale1} but, for more clarity in our expressions of the sequence of vectors, it is better if $y$ actually equals the opposite of the $y$ computed in \cite{ale1}. Otherwise we would have to change many signs in the expression of our sequence of vectors. We check that this choice is appropriate for us.

\begin{proposicion}[Adequacy of our choice of $y$]
Taking $y=\frac{b+\sqrt{b^2-4ac}}{2a}$ with $a,b,c$ as in Lemma \ref{formofboundfirstwinner} but substituting $n=2m$ makes $N>0$ and $D>0$.
\end{proposicion}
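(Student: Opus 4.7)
The plan is to verify positivity of $N$ and $D$ at the chosen value of $y$ by combining an asymptotic argument for large $m$ with a direct check for the few small values of $m$. First, I would view both $N$ and $D$ (as given in Proposition \ref{newboundcomplicatedvector}) as quadratic polynomials in $y$, writing $N = N_{2}y^{2}+N_{1}y+N_{0}$ and $D = D_{2}y^{2}+D_{1}y+D_{0}$ where the coefficients $N_{i},D_{i}$ are the explicit functions of $m$ read off from that statement. The leading $y^{2}$-coefficients are $2m$ in $D$ and $-2+2^{1+2m}-2m$ in $N$; both are strictly positive for every $m\geq 1$, so both quadratics open upwards in $y$. This first observation already pins down the sign structure: it suffices to check that the selected $y$ is larger in absolute value than the appropriate root of each quadratic.

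Next, I would compute the asymptotic expansion of the chosen $y=\frac{b+\sqrt{b^{2}-4ac}}{2a}$ after substituting $n=2m$ into the coefficients $a,b,c$ of Lemma \ref{formofboundfirstwinner}. As warned in the preceding section, a naive subtraction of leading terms in $b\pm\sqrt{b^{2}-4ac}$ can wipe out asymptotic growth that is actually alive, so I would apply the conjugation trick: multiply by $\frac{b\mp\sqrt{b^{2}-4ac}}{b\mp\sqrt{b^{2}-4ac}}$ as needed to rewrite $y$ as a ratio whose numerator is a polynomial $b^{2}-(b^{2}-4ac)=4ac$ in the variable $m$, and then apply Procedure \ref{proce} (collecting against $2^{n}$ and checking leading terms step by step) to obtain a clean asymptotic $y\sim c_{*}\,r^{m}$ with $c_{*}>0$ and $r$ a product of small-integer powers in our fixed asymptotic scale. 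The sign of the leading constant $c_{*}$ here is precisely where the choice of the $+$ branch (as opposed to the $-$ branch of Lemma \ref{formofboundfirstwinner}) enters: this branch is the one whose leading asymptotic contribution matches the sign needed to keep both $N$ and $D$ positive.

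With this asymptotic expansion in hand, I would substitute the expansions of $y$ and $y^{2}$ (the latter computed, again, via conjugation so as not to lose terms) into $N_{2}y^{2}+N_{1}y+N_{0}$ and $D_{2}y^{2}+D_{1}y+D_{0}$. In each case I would identify the single summand whose growth term in the scale of Convention \ref{expsca} dominates; by Procedure \ref{proce} this requires collecting against $2^{n}$, selecting a candidate dominant term, taking the limit, and iterating the cut-and-kill step until the dominant growth is located and its coefficient is shown to be strictly positive. The remaining summands then contribute only lower-order corrections, hence cannot flip the sign for $m$ beyond some threshold $m_{0}$. Finally, the finitely many residual cases $m\leq m_{0}$ are handled by direct numerical evaluation in Mathematica, exactly as was done in \cite{ale1} for the analogous verification.

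The main obstacle will be precisely the radical management: $y$ itself carries a square root, so $y^{2}$ appearing linearly in the quadratics in $y$ after expansion produces mixed rational/radical expressions of the chained type flagged in the warnings above. Following Procedure \ref{proce} with the \texttt{Collect[expr,$2^{n}$]} step, performing one round of conjugation to eliminate the outer radical, and then, if an inner $\sqrt{b^{2}-4ac}$ persists, a second round of conjugation as described in the remark on chain conjugations, is the delicate computational heart of the argument. Keeping careful track of which asymptotic terms survive each conjugation — and never discarding a term that gets revived by cancellation at a later stage — is what makes the positivity conclusion reliable rather than spurious.
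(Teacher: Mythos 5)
Your overall strategy is the same as the paper's: treat $N$ and $D$ from Proposition \ref{newboundcomplicatedvector} as quadratics in $y$, feed in the asymptotic growth of the chosen $y$ (managing radicals by conjugation), and conclude positivity from a dominant-term comparison in the scale of Convention \ref{expsca}. However, there is a concrete error at the heart of your proposed mechanism. The paper does not rederive the asymptotics of $y$; it imports from \cite{ale1} that this $y$ satisfies $y\sim-\frac{3^{n+1}}{2^{n+1}n}$, i.e.\ the leading constant is \emph{negative}, not the $c_{*}>0$ you assert. Consequently your explanation that ``the $+$ branch is the one whose leading asymptotic contribution matches the sign needed to keep both $N$ and $D$ positive'' is not how the positivity actually arises, and a proof built on that belief would miss the one comparison that carries the argument: among the dominant terms of $N$ in each power of $y$, namely $2^{6m+3}=8^{n+1}$, $2^{4m+3}y$ and $2^{1+2m}y^{2}$, the linear term is the unique negative contribution, with $2^{4m+3}y\sim-2\cdot6^{n+1}$, and it must be shown to be swamped by the positive $y$-free term $8^{n+1}$ (the $y^{2}$ term being positive and of order $(9/2)^{n+1}/n^{2}$, also harmless). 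Without that $6^{n+1}$-versus-$8^{n+1}$ comparison the positivity of $N$ is not established; with your sign claim one would wrongly believe every dominant contribution is positive and skip it. Your generic ``identify the dominant summand and check its coefficient'' step could in principle recover this if executed scrupulously, but as written the proposal's justification is incorrect on exactly the point the proposition is about.

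Two smaller remarks. First, the auxiliary reduction ``it suffices to check that the selected $y$ is larger in absolute value than the appropriate root of each quadratic'' is not a correct criterion as stated (for an upward-opening quadratic you need $y$ to lie outside the interval between the real roots, which is not an absolute-value condition); you never use it, so it is harmless but should be dropped. Second, the paper's proof establishes $N>0$ only for $n$ large enough and disposes of $D$ by the same one-line reduction to \cite{ale1}; your additional plan of checking finitely many small $m$ numerically goes beyond what the paper does and is fine, but it is not a substitute for supplying the missing dominance comparison above.
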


\begin{proof}
We remind the reader to observe the change of sign in the numerator of our new $y$. We study now the conditions on $y$ so $N>0$ for $n$ big enough, as was done in \cite{ale1}. Now we already know from \cite{ale1} that we can express the first asymptotic growth term of our $y$ as $y\sim-\frac{3^{n+1}}{2^{n+1}n}$. Now the winning terms (in each power of $y$ in the expression obtained in Proposition \ref{newboundcomplicatedvector}) are $2^{6 m + 3}=8^{n+1}, 2^{4 m + 3}y=4^{2m+1}2y=4^{n+1}2y$ and $2^{1+2m}y^2=2^{n+1}y^2$. Thus the only term that would represent a problem is $4^{n+1}2y$ because it is negative but $4^{n+1}2y\sim -4^{n+1}2\frac{3^{n+1}}{2^{n+1}n}=-6^{n+1}2$, which fades against the mighty power of the exponential $8^{n+1}$. Thus, our $y$ is adequate. For checking that $D\neq0$ for this $y$, we have to do exactly the same as in \cite{ale1} and the result is immediate. This settles the adequacy of this choice of the sequence $y$.
\end{proof}

Once we have established that our choice of the sequence $y$ is adequate, we are sure that we have a bound. We want to see that this bound is not just better, but \textit{much} better than the previous bounds. This is the content of the next result, which is the analogue of \cite[Lemma 22.1]{ale1} for this new choice of a linearizing sequence of vectors. In absolute value, our improved root bound is now $|q_{1}^{(n)}|\geq\frac{N}{D}:=\mult_{v(n)}(n)$ and we require again $\un(n)<\mult_{v(n)}(n)$ with the twist that now we want this difference to grow with $n=2m$.

\begin{lema}[Comparison with univariate bound]
For the subsequences of even $n=2m$ indices, we have the asymptotic growth of the difference of bounds $$\mult_{v(n)}(n)-\un(n)\sim\frac{3}{8}\left(\frac{9}{8}\right)^m.$$
\end{lema}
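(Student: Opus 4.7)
The plan is to reduce the target statement to a single (very long) asymptotic computation on the rational function $N/D$ from Proposition \ref{newboundcomplicatedvector}, evaluated at the sequence $y=\tfrac{b+\sqrt{b^{2}-4ac}}{2a}$, and to track the first surviving term in its difference with $\un(n)$. First I would substitute the closed form of $y$ into the expressions of $N$ and $D$, using that the adequacy proposition has already shown the leading behaviour $y\sim -\tfrac{3^{n+1}}{2^{n+1}n}$ and, more importantly, that the dominant contributions inside $N$ and $D$ come respectively from the $y$-independent cubic-scale term $2^{6m+3}=8^{n+1}$ in $N$ and from the $y$-independent term of order $4^{n+1}$ in $D$. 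This already tells us that $N/D\sim 2^{n+1}$, which is the correct first-order growth of $|q_{1}^{(n)}|$ and, in particular, matches the first asymptotic term of $\un(n)$ in the scale fixed in Convention \ref{expsca}; the improvement therefore sits one or more layers deeper in the expansion.

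Next I would rewrite the difference as
\begin{equation*}
\mult_{v(n)}(n)-\un(n)=\frac{N-\un(n)\,D}{D},
\end{equation*}
which isolates the cancellation. I would expand $D$ (a sum of a $y$-free polynomial, a term linear in $y$, and a term quadratic in $y$) via the known asymptotics for $y$, and similarly for $N$, keeping at least two asymptotic terms per piece so that subleading contributions of sizes like $6^{n+1}$, $(9/2)^{n+1}$ and $(9/8)^{m}$ are not prematurely discarded. This is where Procedure \ref{proce} does the heavy lifting: collect every expression against $2^{n}$, examine the middle indices of the resulting tuple, test candidate scale terms $2^{kn}3^{pn}n^{r}$, and use the cut-and-kill step whenever a candidate cancels across the tuple. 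The prediction $\tfrac{3}{8}(9/8)^{m}=\tfrac{3}{8}\cdot 3^{n}/2^{3n/2}$ tells me in advance which scale term must eventually survive, and this serves as the test value to feed into the "Estimation" and "Taking limit" steps of the procedure.

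Because $y$ enters through a square root, the intermediate expressions will contain terms of the form $c-\sqrt{b}$ whose leading asymptotic orders coincide; following the Remark on chained conjugations, I would multiply numerator and denominator by $c+\sqrt{b}$ to replace the offending differences by $(c^{2}-b)/(c+\sqrt{b})$, and then repeat the conjugation once more whenever the residual polynomial still mixes $\sqrt{b^{2}-4ac}$ with another radical that has not fully disappeared. At each conjugation I would reapply the Collect/Explore/Estimate loop of Procedure \ref{proce} on the squared-out numerator, since the dominant term of $c^{2}-b$ typically lives on a different scale than the dominant term of $c$ or $\sqrt{b}$ individually. The goal is to reduce everything to a single quotient whose numerator has a computable leading term and whose denominator stabilises to $\sim 2\cdot 2^{n+1}\cdot(\text{leading }D)$ once $c+\sqrt{b}$ is expanded.

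The main obstacle, as in \cite{ale1}, is purely computational: ensuring that no alive asymptotic term of order $(9/8)^{m}$ or higher is mistakenly killed in the cascade of cancellations produced by the substitution of the radical expression for $y$ and by the subsequent subtraction of $\un(n)$. Concretely, the difficulty is that both the numerator of $\mult_{v(n)}(n)-\un(n)$ after conjugation and the denominator $D(c+\sqrt{b})$ have several competing terms at scales that are close to each other (notably $6^{n}$, $(9/4)^{n}$ and $(9/8)^{m}$), so the order in which one applies the cut-and-kill step matters, and failing the advice after Procedure \ref{proce} about always collecting against $2^{n}$ before taking a limit will make the symbolic software either loop or produce spurious zeros. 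Once this bookkeeping is carried out correctly, the finite nonzero limit $w$ of the final "Cut and kill" step should be precisely $\tfrac{3}{8}$ against the scale term $(9/8)^{m}$, which yields the claimed asymptotic equivalence.
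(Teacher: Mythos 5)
Your plan follows the same route as the paper's own argument: substitute the radical expression for $y$, put $\mult_{v}-\un$ over a common denominator, resolve the leading-order cancellations by (chained) conjugation, and track growth in the fixed asymptotic scale with the collection procedure. The difficulty is that, as written, it is a computational roadmap rather than a proof of the stated equivalence. Nowhere do you derive the constant $\tfrac{3}{8}$ or the base $\tfrac{9}{8}$; you feed $\tfrac{3}{8}\left(\tfrac{9}{8}\right)^{m}$ into the procedure as a ``test value'' and assert that the final limit ``should be precisely $\tfrac{3}{8}$''. But the entire content of the lemma is exactly this quantitative outcome, and it only emerges once the cancellation structure is made explicit and the surviving terms are actually computed.

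Concretely, the paper writes $\un=\frac{p+r\sqrt{q}}{s}$ (note that $\un$ itself carries a radical, which your decomposition $\frac{N-\un D}{D}$ keeps hidden) and $\mult_{v}=\frac{\alpha+\beta\sqrt{g}}{\gamma+\delta\sqrt{g}}$, so that the numerator of the difference splits into four pieces $k=s\alpha-p\gamma$, $v=(s\beta-p\delta)\sqrt{g}$, $u=-r\gamma\sqrt{q}$, $w=-r\delta\sqrt{gq}$, all sharing the dominant magnitude $2^{22m+19}3^{2m+1}m^{4}$ with signs that cancel. A first conjugation by $(k+u)-(v+w)$ gives a difference of squares whose cross terms $2ku$ and $-2vw$ cancel again at leading order, forcing a second conjugation; only then does one find the surviving numerator growth $2^{17m+16}3^{4m+2}m^{3}$ and the denominator growth $s(\gamma+\delta\sqrt{g})\sim 2^{20m+19}3^{2m+1}m^{3}$, whose quotient is $2^{-3m-3}3^{2m+1}=\tfrac{3}{8}\left(\tfrac{9}{8}\right)^{m}$. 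None of these intermediate determinations (nor any substitute for them) appear in your proposal, and your quantitative assertions that remain unverified --- e.g.\ that the post-conjugation denominator ``stabilises to $\sim 2\cdot 2^{n+1}\cdot(\text{leading }D)$'' --- are too imprecise to check against them. Until the two-stage cancellation is actually carried out and the surviving terms computed, the claimed asymptotics is assumed, not proved.
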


\begin{proof}
We proceed now as in \cite{ale1} taking special care when freeing radicals and using the strategy of considering conjugates. In particular, we compute $\mult_{v}-\un$. We proceed again step by step writing $y=\frac{f+\sqrt{g}}{h}$ so $y^2=\frac{f^2+g+2f\sqrt{g}}{h^2}$ and substitute these values in the expression of $\mult_{v}=\frac{N}{D}$ and kill denominators inside $N$ and $D$ writing $\mult_{v}=\frac{N(y)}{D(y)}=\frac{h^{2}N(y)}{h^{2}D(y)}$ in order to obtain the manageable expression $\mult_{v}=\frac{\alpha+\beta\sqrt{g}}{\gamma+\delta\sqrt{g}}.$ Remembering again that $\un=\frac{p+r\sqrt{q}}{s}$, we develop conveniently the expression $\mult_{v}-\un=$ $$\frac{k+v+u+w}{s(\gamma+\delta\sqrt{g})}$$ with $k:=s\alpha-p\gamma,v:=(s\beta-p\delta)\sqrt{g},u:=-r\gamma\sqrt{q}$ and $w:=-r\delta\sqrt{gq}$. The dominant terms of the sums in our decomposition of the numerator are \begin{enumerate}
\item $k\sim -2^{22 m+19} 3^{2 m+1} m^4$,
\item $v\sim(3^{2 m+1} 4^{8 m+7} m^3)\sqrt{2^{6 n+8} n^{2}}=2^{22 m+19} 3^{2 m+1} m^4$,
\item $u\sim-(-2^{16 m+15} 3^{2 m+1} m^3)\sqrt{2^{6n+6} n^{2}}=-2^{22 m+19} 3^{2 m+1} m^4$,
\item $w\sim(3^{2 m+1} 4^{5 (m+1)} m^2)\sqrt{(2^{6 n+8} n^{2})(2^{6n+6} n^{2})}=2^{22 m+19} 3^{2 m+1} m^4$.\end{enumerate} As happened in \cite{ale1}, these dominant terms annihilate so we have to keep track again of the surviving terms freeing radicals through the use of conjugates. We want to be able to estimate correctly the dominant behaviour of both the numerator and the denominator and, as in \cite{ale1}, we have the luck that the conjugate expressions in the numerator $(k+u)-(v+w)\sim$ \begin{gather*}-2^{22 m+20} 3^{2 m+1} m^4-2^{22 m+20} 3^{2 m+1} m^4=-2^{22 m+21} 3^{2 m+1} m^4\end{gather*} and in the denominator \begin{gather*}\gamma-\delta\sqrt{g}\sim 2^{12 m+12} m^3-(-2^{6 m+7} m^2)\sqrt{2^{12 m+10} m^2}=\\2^{12 m+12} m^3-(-2^{6 m+7} m^2)(2^{6 m+5} m)=2^{12 m+13} m^3\end{gather*} do not annihilate. We use the good behaviour of these conjugates to free some radicals via multiplication. We proceed similarly with the denominator. This is what we do next.

As managing the denominator is easy, we proceed first with the numerator. We do this multiplying by the nice conjugates we saw above. This will free some radicals. We warn the reader that in the next explanation of the structure of the computations involved all the names are locally set. As we have seen that $(k+u)-(v+w)$ is dominantly $-2^{22 m+21} 3^{2 m+1} m^4$, we can multiply by it in order to get a nice difference of squares. Hence, multiplying the numerator $k+u+v+w$ by $(k+u)-(v+w)$ we obtain precisely $(k+u)^2-(v+w)^2=k^2+u^2+2ku-v^2-w^2-2vw=(k^2+u^2-v^2-w^2)+(2ku)+(-2vw)=r+s+t.$ We will again need a further use of conjugates to determine the real growth of this because the expressions of $s$ and $t$ contain $\sqrt{q}.$ We have to see first what is the growth of the conjugate $r-(s+t)$. In order to see this, we first need to establish the real growth of $s+t$, which is not immediate because there is a cancellation again as it is easy to see that $ku\sim (2^{22 m+19} 3^{2 m+1} m^4)^2\sim vw$. Thus the use of conjugates is necessary again. It is clear that $s-t=(2ku)-(-2vw)=2(ku+vw)\sim 4(2^{22 m+19} 3^{2 m+1} m^4)^2=2^{44 m+40} 3^{4 m+2} m^8$ while computing we can see that $(s+t)(s-t)=s^2-t^2=(2ku)^2-(2vw)^2=4qr^{2}(((s\alpha-p\gamma)\gamma)^{2}-((s\beta-p\delta)g\delta)^{2})\sim-2^{83 m+76} 3^{10 m+5} m^{15}$ so we obtain that $$s+t=\frac{s^{2}-t^{2}}{s-t}\sim\frac{-2^{83 m+76} 3^{10 m+5} m^{15}}{2^{44 m+40} 3^{4 m+2} m^8}=-2^{39 m+36} 3^{6 m+3} m^7.$$ As $r\sim-2^{39 m+36} 3^{6 m+3} m^7$, we get that the sum $r+s+t\sim-2^{39 m+37} 3^{6 m+3} m^7$ and therefore we obtain $k+v+u+w=\frac{(k+u)^{2}-(v+w)^{2}}{(k+u)-(v+w)}\sim\frac{-2^{39 m+37} 3^{6 m+3} m^7}{-2^{22 m+21} 3^{2 m+1} m^4}=2^{17 m+16} 3^{4 m+2} m^3.$

Proceeding similarly with the denominator we get that the conjugate of the problematic factor is dominantly $\gamma-\delta\sqrt{g}\sim2^{12 m+13} m^3$ while the resulting difference of squares is dominantly $\gamma^2-\delta^{2}g\sim2^{24 m+25} m^5$ so, finally, we can see that the problematic factor is dominantly $\gamma+\delta\sqrt{g}\sim\frac{2^{24 m+25} m^5}{2^{12 m+13} m^3}=2^{12 m+12} m^2$ so, as $s\sim2^{8 m+7} 3^{2 m+1} m,$ the denominator is dominantly $$s(\gamma+\delta\sqrt{g})\sim2^{20 m+19} 3^{2 m+1} m^3.$$ All in all, we see that the difference is dominantly $$\frac{2^{17 m+16} 3^{4 m+2} m^3}{2^{20 m+19} 3^{2 m+1} m^3}=2^{-3 m-3} 3^{2 m+1}=\frac{3}{2^3}\frac{3^{2m}}{2^{3m}}=\frac{3}{8}\left(\frac{9}{8}\right)^m,$$ as we wanted to see.
\end{proof}

The consequences of this lemma are clear. In particular, this result shows us that we obtained the best bound so far using the relaxation (and in the literature). The analysis of this improvement in qualitative comparison to the results of \cite{ale1} is the content of the next section.

\section{Explosive improvement of the bounds}\label{explo}

We immediately obtain the next corollary. We will see that we could even go beyond now in a fairly natural way.

\begin{corolario}[Exponential explosion in the difference]
The difference between $\mult_{v(n)}(n)$ and $\un(n)$ grows exponentially when $n=2m\to\infty$ by steps of size $2$, that is, through the indices given by even numbers.
\end{corolario}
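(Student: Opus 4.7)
The plan is to deduce this corollary as an immediate quantitative consequence of the preceding lemma, where the asymptotic growth of the difference of bounds has already been established as $\mult_{v(n)}(n) - \un(n) \sim \tfrac{3}{8}\left(\tfrac{9}{8}\right)^{m}$ for $n = 2m$. Since the corollary only asks about exponential growth along the subsequence of even indices, no further asymptotic manipulation is needed: I would just invoke the lemma directly and point out that the base of the exponential is strictly larger than one.

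More concretely, the first step is to rewrite the asymptotic equivalence supplied by the lemma in the asymptotic scale fixed in Convention \ref{expsca}, i.e.\ as a function of the form $c a^{m}$ with $c = \tfrac{3}{8} > 0$ and $a = \tfrac{9}{8} > 1$. The second step is to interpret ``grows exponentially'' in the precise sense given by Notation \ref{insteadO}: namely, that there exists a real number $a > 1$ such that $\mult_{v(n)}(n) - \un(n) \gtrsim a^{m}$ for $n = 2m$ large enough. The lemma provides this with $a = \tfrac{9}{8}$, and in fact gives the much stronger statement that the difference is asymptotically equivalent to a positive constant multiple of $(\tfrac{9}{8})^{m}$.

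Finally, one should add the small remark that, because we are parameterizing by $n = 2m$, moving $n$ by steps of size $2$ amounts to moving $m$ by steps of size $1$, so the exponential growth in $m$ translates directly into exponential growth in $n$ along the even-indexed subsequence (with base $(\tfrac{9}{8})^{1/2}$ if one prefers to phrase it in terms of $n$ itself). Since the previous lemma already certifies that the dominant asymptotic term is nonzero and positive, the sign issue is also settled, and the difference indeed diverges to $+\infty$ exponentially fast.

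The only part where there could conceivably be an obstacle is checking that the constant $\tfrac{3}{8}$ is truly positive and that the base $\tfrac{9}{8}$ is truly greater than one; but both are trivial numerical facts, so the corollary follows with essentially no additional work beyond invoking the lemma. All the genuine difficulty, namely the chain of conjugations and the cancellation bookkeeping in the numerator and denominator, was absorbed in the proof of that lemma.
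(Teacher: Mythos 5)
Your proposal is correct and follows essentially the same route as the paper: the corollary is obtained by invoking the comparison lemma giving $\mult_{v(n)}(n)-\un(n)\sim\frac{3}{8}\left(\frac{9}{8}\right)^{m}$ and observing that this quantity diverges exponentially since $\frac{9}{8}>1$ (and $\frac{3}{8}>0$). Your additional remark on rephrasing the base as $\left(\frac{9}{8}\right)^{1/2}$ in terms of $n$ is a harmless elaboration that does not change the argument.
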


\begin{proof}
It is clear because $\frac{3}{8}\left(\frac{9}{8}\right)^m\to\infty \mbox{\ when\ } m\to\infty$ and this happens exponentially because $\frac{9}{8}>1.$
\end{proof}

As we see, computing the $y$ in \cite{ale1} gave us a hint for a $y$ to use in this more elaborated case. This choice of $y$ eventually allowed us to carry out the numerical experiment that gave us this last nice bound. It would have been much more difficult to find that good guess of a sequence of approximated generalized eigenvectors without first having a nice $y$ to try with in the computations. However, it is clear that we could have made our choice of $y$ here optimal with a bit more of work. For this, we could have directly proceeded similarly as it was done previously in \cite{ale1}.

\begin{remark}[Better $y$]
A straightforward exercise that should give now an even better bound is to compute the optimal $y$ for the current expression at hand as it was done in \cite{ale1}. We leave it as an exercise because we obtained already what we wanted here using the current suboptimal $y$: we showed that the bound obtained through the multivariate spectrahedral relaxation drastically (explosively) improves by an \textit{exponential} growth term with respect to the univariate bound obtained in \cite{ale1}.
\end{remark}

Thus, we have the next nice and easy exercise. Its solution would consist in repeating the whole optimization process performed in \cite{ale1} but for the new sequence of vectors $v$ we used to linearize the relaxation in this current paper.

\begin{ejercicio}[Finding such $y$]
Find the optimal $y$ for the new sequence of vectors $v$ used here, construct the corresponding bound obtained through the use of this new optimal $y$ and compare it with the bound obtained in this article using the suboptimal $y$ obtained in \cite{ale1}.
\end{ejercicio}

This finishes our analysis of the bounds obtained through the linearization of the spectrahedral relaxation using this new more structured guess of sequence of vectors. In particular, this allowed us to prove and certify that effectively the multivariate spectrahedral relaxation provides an improvement in the global approximation of the Eulerian RCSs measured around the diagonal that does not vanish at infinity. In fact, we can actually certify that this improvement grows rapidly (exponentially) towards infinity when the index $n$ goes to infinity.

Now we want to discuss how we could explore which further steps we could take to improve the bounds obtained through the spectrahedral relaxation powering it with other compatible methods traditionally used to compute bounds for the extreme roots of the univariate Eulerian polynomials. In particular, we will try to establish what \textit{compatibility with the relaxation} means and which other approximation methods could fulfill this compatibility requirement. For a comprehensive compendium on approximation methods for roots of univariate polynomial that we could explore, study and discuss in the future, we direct the reader to the two great volumes \cite{macnameeii}. Our explorations in the next section dive into the topic of the possibility of fully extending these univariate methods to multivariate ones. We want to explore doing this either directly or by using indeed the spectrahedral relaxation to provide \textit{a kind of extension of these univariate methods by an auxiliary technique} through the powerful multivariate management tool of the spectrahedral relaxation.

\section{Other venues for improvements}\label{other}

There are immediate venues for improvement of the results presented and exposed up until here. The most natural one is searching for even better guesses for sequences of vectors. We cannot discard the possibility of finding even better sequences of vectors. However, these sequences of vectors should also combine easy computability and understandable form with reliability and strength of the results and asymptotic bounds given. And such balance is a difficult one.

\begin{remark}[Better guesses]
A better guess of the structure of the approximated generalized eigenvectors should therefore combine treatability in the computations and actual improvement of the asymptotic estimations. We cannot discard that such structures exist, but, seeing the numerics of the actual eigenvectors computed for small instances, it seems difficult to know how the structure used here could be improved in these two directions. If we have to make a bet, the long tail of all ones in the second half of the structure of the vectors seems to, again as happened with the vector used in \cite{ale1}, agglutinate (and therefore kill) much of the information about the original polynomial variable split that is actually still preserved by the spectrahedral relaxation. The most obvious improvement of the structure of the guessed sequence of  approximated generalized eigenvectors towards a more effective retrieval of information about the multivariate split during the linearization of the spectrahedral relaxation should, in our view, be somewhere there or along these lines of thought and research.
\end{remark}

Additionally, the results here show us that multivariability is a dealbreaker at the time of obtaining bounds. Therefore it is natural for us to wonder if adding even more variables could improve our results further.

\begin{remark}[More variables]
We proved here that adding more variables in a \textit{meaningful} sense improves the spectrahedral relaxation. The meaningful sense in this case was a refinement in the counting of descents that kept track of the position of the descent tops within each permutation. However, we also saw at the beginning of this article that this multivariate lifting can also be done entirely synthetically through stability preserving operators and recurrence. Thus there are different mechanisms to increase the number of variables and we know that increasing the variables can improve the spectrahedral relaxation. The question therefore lies in measuring how much different methods do this variable expansion (increase) so that we can determine which of these many approaches are better for our objectives of increasing accuracy on the approximation given by the spectrahedral relaxation. All this while keeping RZ-ness (or other multivariate generalizations of being real rooted like stability or hyperbolicity). Notice the immediate relation of this line of research with the amalgamation conjecture proposed in \cite{main}, which is yet another proposed path to construct (more-variable) multivariate liftings. We point towards researching the theory of these multivariate lifting methods in general from the perspective of their approximation theory potential (also with respect to the spectrahedral relaxation accuracy power of each method).
\end{remark}

The problem is that it is not entirely clear how to add more variables keeping RZ-ness beyond the explained process of recursion through stability preserving operators. Another possibility lies precisely in changing the stability preserving operators used in the recursion, although this approach would imply the study of a different sequence of multivariate liftings. Thus, this method, by itself, raises many interesting questions in the direction of (the necessity of developing tools for) comparing the approximations, estimations and bounds obtained by different recursions and liftings of the original sequence of polynomials depending on the different stability preserving operators used in the recursions producing the corresponding liftings.

\begin{remark}[Other recursions]
Therefore the Eulerian rigidly convex sets studied here are not the only ones. Other recursions than these explained in Section \ref{intro} could be used instead. The product of these recursions would therefore be other families of RZ-polynomials having the (information about the) corresponding univariate Eulerian polynomials injected in their diagonals (or somewhere else in maybe some other sense). The only reason we focused our attention first in the multivariate lifting we used here is that such lifting is the most natural. This naturality comes from the fact that this lifting explored here has a combinatorial interpretation as a tagging of descent tops with variable indices, which is certainly a natural generalization of just counting descents towards a framework of counting features in a finer manner keeping and storing further information. This does not mean that other recursion through different stability preservers could not have other nice combinatorial interpretations or just better analytical behaviour when combined with the methods we used here. This therefore opens the door to exploring and studying these other possible different liftings generating different sequences of multivariate RZ polynomials. This also implies the construction and exploration of other sequences of RCSs that could be studied in connection to the original univariate Eulerian polynomials introduced centuries ago in \cite{euler}. Thus these new objects could shed more light into the structures still hidden within the realms of the generating classical ideas. In this way, these new objects, connected with the classical ideas they originate from, could help us in the task of showing new ramifications into the current theories where they appear combined nowadays. At the end, therefore, this strategy would produce new deep insights by approaching the subject through many varied mathematical branches in an integrated and systemic way around these polynomials and their properties. These diverse approaches have different theoretical tastes and they make use of recently studied, introduced and popularized methods. In particular, these tools can be put in connection with the Eulerian polynomials and topics adjacent to them across very diverse realms of current mathematical research.
\end{remark}

This automatically begs the question of which stability preserving operators for the recursivity work better with the spectrahedral relaxation in the described sense of producing better approximations to the corresponding RCSs. That concept of \textit{better approximation} was measured through bounds for the extreme roots of the univariate Eulerian polynomials injected in the diagonal in this article, but other more accurate and global measures could eventually do a better job in this sense. All in all, we want to study which stability preserving operators produce sequences of multivariate liftings for which the spectrahedral relaxation becomes an even more accurate approximation to the corresponding RCSs (not necessarily just around the diagonal).

\begin{remark}[Recursions and the diagonal]
In addition to that, we have to notice that our polynomials of interest do not necessarily have to land always in the diagonal. They can be injected along different lines (or even their information collected in completely different ways). Exploring recursions performing different injections is also of interest. In general, other methods can inject the information of the original polynomial into the lifting in a different form that is still easy to recover. This path opens a high variety of possibilities worth exploring in connection with other sequences and triangles of numbers and with known methods of numerical approximation. We can cite here \cite{macnameeii} and how some of these methods could produce a high variety of different ways and strategy to inject the (information of the) univariate polynomials into different forms or concepts of lifting for these objects using the tools provided there.
\end{remark}

This observation also leads us to ask about strategies, tools and methods to measure this accuracy around the diagonal across recursions performed through different stability preserving operators. In particular, we want to compare different operators for their effects on the accuracy of the spectrahedral relaxation (approximation) around the diagonal. Of course, further measures of accuracy of the approximation provided by the spectrahedral relaxation would also be beneficial for that global analysis in terms of the operators used, that is, beyond the globality given merely by analyzing global accuracy of the approximation to RCSs formed by the sequence of approximations generated by spectrahedral relaxation associated to the same operator (recursion). We could go beyond that concept exploring and comparing the effects of different recursions (operators) on the abilities of the spectrahedral relaxation to provide good approximations to the corresponding RCSs.

\begin{remark}[Measuring the accuracy of the spectrahedral relaxation for different operators]
Of course, we measured the effectivity of the spectrahedral relaxation introduced in \cite{main} to produce approximations to the RCSs of the multivariate Eulerian polynomials using their accuracy towards generating bounds for the extreme roots of the polynomials lying in the diagonal. For other operators producing liftings, these measures might have to be refined or expanded. For example, in the case where our polynomials of interest lie far from the diagonal we should measure accuracy through these other lines. But this is not the only approach. As we said before, sometimes the injection of the original polynomials can happen in a completely different way and that would require a new approach towards measuring this accuracy. Additionally, measuring accuracy just through approximations via a line is a good starting point for this task, but it can not be the end of this story since much information is not being used in this way and many structures are therefore being ignored along this simplified path. We do this since there are many complications in going beyond this method. These complications have to be addressed more carefully. The space of just a paper does not allow all the care needed. Thus we decide to proceed in this simplified way because we always a need starting point to justify that some approach work. This is the reason why we worked here in such an anchor for the theory of the measure of the accuracy along the diagonal. However, a comprehensive theory of these measures of accuracy would be something desirable to develop in future works around this topic.
\end{remark}

Finally, it is natural to ask for other related polynomials that \textit{lift} the univariate polynomials in a different sense. This can mean that they lift to other transformations of the roots that allow us to recover the original roots easily or that they do so with the coefficients or other features of the original polynomials. In particular, we search for further and more ample concepts of \textit{lifting} in a broad sense.

\begin{remark}[Liftings as structured transformations]
In this regard, lifting a sequence of univariate polynomials admits many different interpretations depending on the structures we want to maintain and use during the lifting procedure. A deeper look at these different structures and associated techniques would shed more light about the connections between the different features we focus on at each time. For example, here we saw how we can actually reach the same lifting using two completely different approaches: either a combinatorial one or one based on number theory and stability preserving operators and recursion. It turns out that these methods are in fact connected, as can be seen in \cite{visontai2013stable, haglund2012stable}. Studying these relations would show more of these connections. The emergence of these connections establishes thus roads and highways to study deeper mathematical structures involved in these transformations. Additionally, at a more local level, we can witness the emergence of different local structures of our polynomials that can be expanded or generalized towards settings that go beyond the univariateness. For example, in this regard, we can see that multivariateness does not need to be the end of the story of generalization presented here: there are other possible notions of liftings and possibilities of injection of the information using different methods or approaches. We can therefore think further about other (complementary or totally different) notions of lifting that go, along similar lines, through ways and tools allowing us to recover the original information of the polynomials at play in a reasonably easy way.
\end{remark}

This finally lead us to discuss what is the nature expected or hoped of any future improvement using these or different methods, approaches or tools. This is what we briefly glance at in the next section.

\section{The nature of future improvements}\label{nature}

So far, we have obtained bounds for these roots by one side. Having bounds by the other side could help us to understand if a further improvement is possible or if we are already accurate up to some growth term. Therefore, for future improvements, we will need to access strategies allowing us to construct bounds by the side opposite to the one the spectrahedral relaxation gives.

\begin{remark}[Lateral bounds]
The spectrahedral relaxation provides directly outer bounds for the innermost oval of zeros of the RZ polynomials we are dealing with. Here, using the palindromicty of the univariate Eulerian polynomials, we could transform these outer bounds into inner bounds for the outermost oval of zeros of the same polynomials. If we want to determine the asymptotic behaviour of these roots using this method, then we need dual tools providing bounds in the other direction so that we can determine the asymptotic growth through a sandwich of inequalities. For this, we will need to study dual concepts of a relaxation, something that does not appear in the optimization literature and would therefore require a completely novel treatment.
\end{remark}

Finally, these developments could allow use to close the asymptotic gap until we obtain bounds that match the asymptotic expansion of the actual roots up to the exponentially growing terms, that is, up to the terms of the form $a^{n}$ with $a>1$ in the expansion. This would mean that we obtain an almost closed form \textit{at infinity} for the expressions of these extreme roots. For interest in the form of these roots, see \cite{287547} where a different approach is taken in Richard Stanley’s answer.

\begin{remark}[Form at infinity]\label{formatinf}
Managing a good understanding of the asymptotic behaviour of a sequence at infinity provides a nice guidance to roughly understand and work with the quantities at play. In particular, estimating the growth of these roots until the difference with the approximation gets small (maybe even in relative terms) at a considerably fast pace (for whatever measure of this we want to use) gives much more numerical insight about the nature of these quantities than some difficult to write, use, compute and understand algebraic expressions could actually provide. For this reason, we consider legitimately interesting developing a theory about the algebraic asymptotic form at infinity (in a sense determined by the actual accuracy we expect at each moment) of these (especially combinatorial) sequences we are interested in. These forms at infinity could end up shedding new lights about the original objects by offering fresher ideas and further insights about structures closely connected or intimately related to them. Even for these structures that could lie further or be connected in a more loose way, understanding them better and their relations with the original object could cause chain reactions (actual ones) providing deep clues about the behaviour and properties of the original objects that initially interested us. This could eventually serve also as the starting point for researching and exploring further interesting structures emerging from the study, analysis or applications of these methods.
\end{remark}

Here we did not directly investigate the corresponding asymptotic expansions because we were just interested in certifying an (exponentially explosive) improvement produced by the use of the multivariate relaxation. However, the future of our work passes by effectively determining the asymptotic expansion so that we can identify more growth terms of the actual extreme roots until we exhaust the power of our methods.

\begin{remark}[Asymptotic growth terms]
In particular, so far we know that the first term in the asymptotic scale convened in Convention \ref{expsca} and used here is $2^{n+1}$ since we determined this through a sandwich of inequalities in \cite{ale1}. Here we did not directly extract more growth terms due to the fact that we did not determine the actual expansions of the bounds. We just determined the growth term of the sequence of differences between the two different bounds in order to determine who was the winner and \textit{by how much} asymptotically. In the future, we will investigate the actual asymptotic expansions of these roots and bounds in order to try to extract further estimations for deeper terms in the asymptotic expansion of the roots of the univariate Eulerian polynomials in the scale we use here. That deeper knowledge about these further terms of the asymptotic expansion of the extreme roots will set us closer to determine nice forms at infinity of these extreme roots in the sense discussed above in Remark \ref{formatinf}. This is the beginning of our path towards the future development of the concept about the asymptotic algebraic expressions we mentioned as worth studying further there.
\end{remark}

This finishes our analysis of prospective directions for further research deep within this topic. We can now conclude this article.

\section{Conclusion}\label{con}

Here we have constructed Eulerian polynomials as analogues of powers of the linear form $1+x$ using their relation with their corresponding triangle of coefficients. After that, we have seen strategies to lift these univariate polynomials to multivariate ones in a synthetic way, that is, without having to resort to the underlying combinatorics. This synthetic construction showed us a venue for exploring these polynomials from the pure perspective of stability preservers. Moreover, this procedure avoided the appearance of the ghost variables seen in \cite{ale1} and showed, from a different perspective, the connection between these recursions through stability preserving operators and the underlying combinatorial object. This can be explored further in \cite{visontai2013stable, haglund2012stable}.

After briefly exploring this, we computed new bounds for the extreme roots of the univariate Eulerian polynomials using the multivariate ones and the spectrahedral relaxation. We are interested in certifying, through these bounds, how good and tight is the global approximation given by the spectrahedral relaxation measuring the accuracy around the diagonal, as was done in \cite{ale1}. However, there this certification of the power and accuracy of the spectrahedral relaxation around the diagonal was weak since the certified advantage of the multivariate relaxation obtained through the use of the sequence of vectors $\{(y,1,\mathbf{-1})\in\mathbb{R}^{n+1}\}_{n=1}^{\infty}$ fades when $n\to\infty$ because the difference with the univariate bound is dominantly $\frac{1}{2}\left(\frac{3}{4}\right)^{n}$, which goes to $0$ when $n\to\infty$.

However, as the numeric experiments were telling us that the advantage of the multivariate relaxation was bigger for small computable values and we did not devise reasons to believe that this advantage should fade away, we embarked ourselves into the task of finding better sequences of vectors able to provide certifications of the advantage of the multivariate relaxation not vanishing at infinity. More than that, we obtained a certification for this advantage to grow explosively (exponentially) when $n\to\infty$ by using a linearization by a more structured sequence of vectors. We also explored the story of how we built that new and better sequence of vectors.

In particular, we explained how we explored different constructions of vectors numerically until we found a structure that provided, in the low numbers where doing these numerical experiments is possible, the best observed estimation. To do this, we reused the $y$ computed in \cite{ale1}, as it worked as an initial guess to determine good numerical results. All in all, these experiments prompted us to consider the sequence of vectors $$v:=\{(y,(-2^{m-i})_{i=3}^{m},(0,\frac{1}{2}),(1)_{i=1}^{m})\in\mathbb{R}^{n+1}\}_{n=1}^{\infty},$$ which produces a linearization providing a bound that diverges apart from the univariate bound. This gives therefore the kind of certificate of explosive (exponential) asymptotic growth at infinity we wanted.

Thus, using this vector and the $y$ found through an optimization process in \cite{ale1} we computed the corresponding linearization and, proceeding like in \cite{ale1}, we determined the effectivity of such new and more structured sequence of vectors to provide a certificate of growth for the difference between the bounds extracted from the univariate relaxation and the multivariate relaxation. This study, in particular, allowed us to prove and certify that this difference not only does not vanish at infinity, but it grows exponentially when $n\to\infty.$ In fact, we saw that, when measured over even indices $n=2m$, this difference is asymptotically dominantly $\frac{3}{8}\left(\frac{9}{8}\right)^m$, which clearly grows exponentially when $n\to\infty$.

These explorations certify that, through the determination of better, more elaborated sequences of vectors, there is still room for improvement on the certifications of the improvement that multivariateness provides to the performance of the spectrahedral relaxation. Exploring further structures of these vectors and other venues of studying higher structures of lifting to multivariateness that can be effectively combined with the spectrahedral relaxation opens the path to the considerations of further work to do in different directions connected with our findings here and those in \cite{ale1}. With this window to the future, we conclude this article.

\section*{Declaration of competing interest}

The author declares that he has no known competing financial interests or personal relationships that could have appeared to influence the work reported in this paper.

\section*{Acknowledgements}

I thank my doctoral advisor Markus Schweighofer for his support during the development of this research. This work has been generously supported by European Union’s Horizon 2020 research and innovation programme under the Marie Skłodowska Curie Actions, grant agreement 813211 (POEMA).

\end{document}